\newtheorem{thm}{Theorem}[section]
\newtheorem{lem}{Lemma}[section]
\newtheorem{claim}{Claim}[section]
\newtheorem{pro}{Proposition}[section]
\newenvironment {proof} {\noindent{\em Proof.}}{\hspace*{\fill}$\Box$\par\vspace{4mm}}
\def\qed{\hfill \nopagebreak\rule{5pt}{8pt}}
\title{An extension of Mantel's theorem to random 4-uniform hypergraphs\footnote{Supported by
NSFC, the ``973" program and PCSIRT.}}
\author{Ran Gu, Xueliang Li, Zhongmei Qin, Yongtang Shi, Kang Yang \\
{\small  Center for Combinatorics and LPMC-TJKLC}\\
{\small Nankai University, Tianjin 300071, P.R. China}\\
{\small Email: guran323@163.com, lxl@nankai.edu.cn, qinzhongmei90@163.com,}\\
{\small shi@nankai.edu.cn, yangkang89@163.com} \\
}
\date{}
\begin{document}
\maketitle
\begin{abstract}
A sparse version of Mantel's Theorem is that, for
sufficiently large $p$, with high probability (w.h.p.), every
maximum triangle-free subgraph of $G(n,p)$ is bipartite. DeMarco and
Kahn proved this for $p>K \sqrt{\log n/n}$ for some constant $K$,
and apart from the value of the constant, this bound is the best
possible. Denote by $T_3$ the 3-uniform hypergraph with vertex set
$\{a,b,c,d,e\}$ and edge set $\{abc,ade,bde\}$. Frankl and F\"uredi
showed that the maximum 3-uniform hypergraph on $n$ vertices
containing no copy of $T_3$ is tripartite for $n> 3000$. For some
integer $k$, let $G^k(n,p)$ be the random $k$-uniform hypergraph.
Balogh et al. proved that for $p>K \log n/n$ for some constant $K$,
every maximum $T_3$-free subhypergraph of $G^3(n,p)$ w.h.p. is
tripartite and it does not hold when $p=0.1 \sqrt{\log n}/n$. Denote
by $T_4$ the 4-uniform hypergraph with vertex set
$\{1,2,3,4,5,6,7\}$ and edge set $\{1234,1235,4567\}$. Pikhurko
proved that there is an $n_0$ such that for all $n\ge n_0$, the
maximum 4-uniform hypergraph on $n$ vertices containing no copy of
$T_4$ is 4-partite. In this paper, we extend this type of extremal
problem in random 4-uniform hypergraphs. We show that
 for some constant $K$ and $p>K \log n/n$, w.h.p.
every maximum $T_4$-free subhypergraph of $G^4(n,p)$ is 4-partite.\\[2mm]
\textbf{Keywords:} Mantel's theorem; random hypergraphs; Tur\'an
number.\\[2mm]
\textbf{AMS subject classification 2010:} 05C65, 05C35, 05C80,
05C75.
\end{abstract}

\section{Introduction}
Mantel's theorem \cite{Mantel} is known as a
cornerstone result in extremal combinatorics, which shows that every
triangle-free graph on $n$ vertices has at most $\lfloor
n^2/4\rfloor$ edges and the unique triangle-free graph that achieves
this bound is the  complete bipartite graph whose partite
sets are as equally-sized as possible. In other words,
every maximum (with respect to the number of edges) triangle-free
subgraph of $K_n$ is bipartite.

It is natural to generalize Mantel's theorem to
hypergraphs. The \emph{generalized triangle},
denoted by $T_k$, is a $k$-uniform
hypergraph with vertex set $[2k-1]$ and edges
$\{1,\ldots, k\}$, $\{1, 2, \ldots, k -1, k + 1\}$, and $\{k, k + 1,
\ldots , 2k -1\}$. The \emph{Tur\'{a}n hypergraph $T_r(n)$} is the
complete $n$-vertex $r$-uniform $r$-partite hypergraph whose partite
sets are as equally-sized as possible. In particular, Mantel's
Theorem states that the maximum triangle-free graph on $n$ vertices
is $T_2(n)$. Frankl and F\"{u}redi \cite{FF} proved that the maximum
3-uniform hypergraph on $n$ vertices containing no copy of $T_3$ is
$T_3(n)$ for $n > 3000$. In \cite{Pikhurko},
Pikhurko proved that there exists an $n_0$ such that for all $n\ge
n_0$, the maximum 4-uniform hypergraph on $n$ vertices containing no
copy of $T_4$ is $T_4(n)$. Actually, there are also some results on
the maximum $k$-uniform hypergraphs containing no copy of some other
specific hypergraphs, such as paths and cycles \cite{BK, FJ, MV}.

DeMarco and Kahn \cite{DK} considered a sparse version of Mantel's
Theorem. Let $G$ be the Erd\H{o}s-R\'{e}nyi random graph $G(n,p)$.
An event occurs \emph{with high probability} (w.h.p.) if the
probability of that event approaches 1 as $n$ tends to infinity. It
is interesting to determine for what $p$ every maximum triangle-free
subgraph of $G(n,p)$ is w.h.p. bipartite. DeMarco and Kahn proved
that this holds if $p> K \sqrt{\log n/n}$ for some constant $K$, and
apart from the value of the constant this bound is the best possible.
Problems of this type were first considered by Babai, Simonovits and
Spencer \cite{BSS}. Brightwell, Panagiotou, and Steger \cite{PS}
proved the existence of a constant $c$, depending only on $\ell$,
such that whenever $p \geq n^{-c}$, w.h.p. every maximum
$K_{\ell}$-free subgraph of $G(n,p)$ is $(\ell-1)$-partite.

Recently, Balogh et al. \cite{BBHL} studied an
extremal problem of this type in random 3-uniform hypergraphs. For
$n \in \mathbb{Z}$ and $p \in [0,1]$, let $G^r(n,p)$ be a random
$r$-uniform hypergraph with $n$ vertices and each element of
$\binom{[n]}{r}$ occurring as an edge with probability $p$
independently of each other. Note that in particular,
$G^2(n,p)=G(n,p)$ is the usual graph case. Balogh et
al. showed that for $p>K \log n/n$ for some constant $K$, every
maximum $T_3$-free subhypergraph of $G^3(n,p)$ w.h.p. is tripartite
and it does not hold when $p=0.1 \sqrt{\log n}/n$.

In this paper, we extend this type of  extremal
problem in random 4-uniform hypergraphs. Denote by $T$ the
4-uniform hypergraph $T_4$ with vertex set $\{1,2,3,4,5,6,7\}$ and
edge set $\{1234,1235,4567\}$.  As a sparse version
of Pikhurko's result, we obtain the following theorem.
\begin{thm}\label{th1}
There exists a positive constant $K$ such that w.h.p. the following is true. If
$G = G^4(n, p)$ is a 4-uniform random hypergraph with $p > K \log n/n$, then every maximum
$T$-free subhypergraph of $G$ is 4-partite.
\end{thm}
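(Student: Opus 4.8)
The plan is to follow the now-standard "container-plus-stability" strategy that DeMarco--Kahn and Balogh et al.\ used, adapted to the 4-uniform setting with $T=T_4$.

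First I would establish a deterministic stability statement for $T$-free hypergraphs: there is a $\delta>0$ such that every $T$-free 4-uniform hypergraph $H$ on $n$ vertices with $e(H)\ge \mathrm{ex}(n,T)-\delta n^4$ can be made 4-partite by deleting at most $o(n^4)$ edges. This follows from Pikhurko's exact result together with a standard supersaturation/removal argument, or can be extracted directly from the structure of near-extremal configurations.

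The heart of the argument is the random part. Write $m=p\binom{n}{4}\approx pn^4/24$ for the expected number of edges of $G=G^4(n,p)$. The goal is to show that w.h.p.\ every maximum $T$-free subhypergraph $F\subseteq G$ is exactly 4-partite, i.e.\ is a subgraph of some $T_4(n)$-type partition hypergraph. I would argue in two stages.

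Stage one (approximate structure): show w.h.p.\ every maximum $T$-free $F\subseteq G$ is "close" to 4-partite. Since a maximum 4-partite subhypergraph of $G$ has roughly $(3/4)\cdot(\text{extremal density})\cdot m$... more precisely, restricting $G$ to the densest balanced 4-partition gives a $T$-free subhypergraph of size $(1-o(1))\,p\cdot\mathrm{ex}(n,T)$, so $e(F)\ge (1-o(1))\,p\,\mathrm{ex}(n,T)$. Now I would use the hypergraph container method (Balogh--Morris--Samotij / Saxton--Thomason) to cover all $T$-free subhypergraphs of $K_n^{(4)}$ by a family $\mathcal{C}$ of containers, each of size at most $(\mathrm{ex}(n,T)+o(n^4))$ and with $\log|\mathcal{C}| = O(n^{3}\log n)$ (the exponent being one less than the uniformity, via the relevant supersaturation bound for $T$). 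For any container $C$ that is *not* $\delta$-close to 4-partite, the deterministic stability statement forces $e(C)\le \mathrm{ex}(n,T)-\delta' n^4$, and then a Chernoff bound shows $e(G\cap C)\le (\mathrm{ex}(n,T)-\delta'' n^4)p$ with failure probability $\exp(-\Omega(pn^4))=\exp(-\Omega(n^3\log n))$; a union bound over $\mathcal{C}$ kills all such bad containers. Hence w.h.p.\ every $T$-free $F$ of near-maximum size lies in a container that *is* $\delta$-close to 4-partite, and then a short argument (every near-max $F$ already has near-extremal density inside its container) upgrades this to: $F$ can be made 4-partite by deleting $o(pn^4)$ edges.

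Stage two (exact structure): this is the delicate part and the main obstacle. Having fixed the "approximately correct" 4-partition $(V_1,V_2,V_3,V_4)$ associated to $F$, I must show $F$ has *no* edge violating the partition, i.e.\ no edge inside some $V_i$ or with a repeated part. Suppose $F$ has such a "bad" edge $e$. The strategy, as in DeMarco--Kahn, is a local switching/counting argument: $F$ being maximum means one cannot delete $e$ and add more than one edge; but a bad edge typically "blocks" many potential crossing edges of $G$ from being added (those that would create a copy of $T$ with $e$ and already-present edges of $F$), and one shows that w.h.p.\ $G$ contains enough crossing edges near $e$ that this blocking is impossible without contradicting maximality. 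Quantitatively this needs: (a) a counting lemma saying each bad edge together with the crossing structure of $G$ forces $\Omega(pn^3)$ "forbidden" crossing pairs, and (b) concentration so that simultaneously for *all* choices of bad edge and local partition the bound holds — this is where $p>K\log n/n$ is exactly the threshold, since one needs $pn\to\infty$ fast enough that degrees/codegrees concentrate and a union bound over $O(n^7)$ choices of the $7$-vertex configuration survives factors like $\exp(-\Omega(pn))$. I expect the bookkeeping in (a) — correctly identifying, for the specific edge set $\{1234,1235,4567\}$ of $T_4$, which crossing edges a bad edge rules out, and showing there are enough of them — to be the technically heaviest step, since $T_4$ has the asymmetric structure of two edges sharing three vertices plus one disjoint edge, so the switching analysis must consider several cases according to how the bad edge meets the parts.
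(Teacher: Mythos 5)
Your Stage~1 is essentially what the paper gets from Theorem~\ref{th2} (Samotij's stability theorem); the paper cites that result rather than re-deriving it via containers, but the content is the same and this stage of your plan is sound.

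The genuine gap is in Stage~2. You correctly identify the overall shape — a bad edge should force many copies of $T$, each of which forces a crossing edge to be missing from $F$ — but you do not identify the obstruction that makes this fail as stated, and which drives roughly half the work in the paper. For an edge $e\in B_1$ with $w_1,w_2\in A_1$, the number of $T$-copies you get is controlled by the \emph{common crossing degree} $d_\Pi(w_1,w_2)$, i.e.\ the number of triples $(x,y,z)\in A_2\times A_3\times A_4$ with both $w_1xyz,w_2xyz\in G$. Ordinary co-degree concentration does not bound this below, because the partition $\Pi$ can conspire to make $d_\Pi(w_1,w_2)$ tiny even while $d(w_1,w_2)$ concentrates; the paper formalizes this via the set $P(\Pi)$ of low common-crossing-degree pairs. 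If the bad edges of $F$ happen to sit on such pairs, your counting gives nothing. The paper handles this with a two-lemma split: Lemma~\ref{lem13} proves the counting bound \emph{only under the additional hypothesis} that the shadow of $B_1$ avoids $P(\Pi)$, and then Lemma~\ref{lem14} shows that if $P(\Pi)\neq\emptyset$, the cut $\Pi$ is so far from optimal ($q(G)>|G[\Pi]|+|P(\Pi)|\delta n^3p^2$) that one can afford to discard all edges meeting $P(\Pi)$ from $\tilde F$ before applying Lemma~\ref{lem13}. This ``pay for deleted edges with the slack from suboptimality of $\Pi$'' step is the key idea missing from your sketch, and Lemma~\ref{lem14} itself requires a delicate two-round exposure argument plus a union bound over all balanced cuts, not the $O(n^7)$ union bound you describe. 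Even within Lemma~\ref{lem13}, the counting is not a single estimate: the paper must partition $A_1$ into high- and low-degree vertices in the shadow graph, further split by crossing degree, and prove three separate lower bounds on $|M|$ (Claims~\ref{lem16}--\ref{lem18}), because a naive ``each bad edge yields $\Omega(p^2n^3)$ forbidden configurations'' double-counts when bad edges cluster on few vertices. Your proposal flags this as ``the technically heaviest step'' but does not supply the decomposition or the case analysis, and without the $P(\Pi)$ mechanism the argument as written does not close.
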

We should point out that the proof technique of Theorem \ref{th1} is
similar but more complicated than that of Balogh et
al. used in \cite{BBHL}. The following theorem from
\cite{Samotij} states asymptotic general structure, which is much
useful to our proof of the main result.
\begin{thm}\label{th2}
For every $\delta > 0$ there exist positive constants $K$ and $\epsilon$ such that if $p_n \ge K/n$, then w.h.p. the following holds. Every $T$-free subhypergraph of $G^4(n, p_n)$ with at least
$(3/32 -\epsilon)\binom{n}{4}p_n$ edges admits a partition $(V_1, V_2, V_3, V_4)$ of $[n]$ such that all but at most $\delta n^4p_n$
edges have one vertex in each $V_i$.
\end{thm}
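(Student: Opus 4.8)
The plan is to transfer the \emph{dense} stability phenomenon for $T$ to $G^4(n,p_n)$ by means of the transference machinery for sparse random structures (hypergraph containers, and the results of Schacht and of Conlon--Gowers), of which Theorem~\ref{th2} is the instance for $T$; call an edge (i.e.\ a $4$-set) \emph{rainbow} with respect to a partition into four parts if it meets each part. Two \emph{dense} facts are needed. First, the Tur\'an density of $T$ equals $3/32$, since the Tur\'an hypergraph $T_4(n)$ has $(3/32+o(1))\binom{n}{4}$ edges; and Pikhurko's analysis yields the matching \emph{stability} assertion, namely that for every $\eta>0$ there are $\epsilon_0>0$ and $n_0$ such that every $T$-free $4$-graph on $n\ge n_0$ vertices with at least $(3/32-\epsilon_0)\binom{n}{4}$ edges can be made $4$-partite by deleting at most $\eta n^4$ edges. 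Combining this with the hypergraph removal lemma (and a routine juggling of constants) gives the \emph{supersaturated} version I actually use: for every $\delta'>0$ there are $\epsilon_1>0$, $\zeta>0$ and $n_1$ such that every $4$-graph on $n\ge n_1$ vertices with more than $(3/32-\epsilon_1)\binom{n}{4}$ edges and at most $\zeta n^7$ copies of $T$ admits a partition of $[n]$ into four parts with all but at most $\delta' n^4$ of its edges rainbow.

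Second, I would invoke the container/transference step for the family of copies of $T$ inside $\binom{[n]}{4}$. The subconfiguration of $T$ controlling the density parameter is the pair of edges $\{1,2,3,4\},\{1,2,3,5\}$, which spans five vertices, so $m_4(T)=\max\{(2-1)/(5-4),\,(3-1)/(7-4),\dots\}=1$ and the relevant threshold is $n^{-1/m_4(T)}=n^{-1}$. One thereby obtains, for each $n$, a family $\mathcal{C}=\mathcal{C}(n)$ of $4$-graphs on $[n]$ with: (i) every $T$-free $4$-graph on $[n]$ is a subgraph of some $C\in\mathcal{C}$; (ii) every $C\in\mathcal{C}$ has at most $\zeta n^7$ copies of $T$; and (iii) $\log|\mathcal{C}|\le b\,n^{3}$ for a constant $b=b(\zeta)$. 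By the supersaturated stability statement above, each $C\in\mathcal{C}$ is either \emph{sparse}, meaning $e(C)\le(3/32-\epsilon_1)\binom{n}{4}$, or \emph{structured}, meaning it carries a partition $\mathcal{P}_C$ of $[n]$ into four parts for which at most $\delta' n^4$ edges of $C$ fail to be rainbow.

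Now put $G=G^4(n,p_n)$, $\epsilon:=\epsilon_1/2$ and $\delta':=2\delta/3$, and take $K$ large in terms of $b,\epsilon_1,\delta$. For each fixed sparse container $C$, $e(C\cap G)$ is binomial with mean at most $(3/32-\epsilon_1)\binom{n}{4}p_n$, so Chernoff's inequality gives $\Pr\left[e(C\cap G)\ge(3/32-\epsilon)\binom{n}{4}p_n\right]\le\exp(-c\,n^4p_n)$ for some $c=c(\epsilon_1)>0$; since $n^4p_n\ge Kn^3$ and $\log|\mathcal{C}|\le bn^3$, the union bound over $\mathcal{C}$ is $o(1)$ once $K>b/c$. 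For each fixed structured container $C$, the number of edges of $G$ that lie among the at most $\delta' n^4$ non-rainbow edges of $C$ is binomial with mean at most $\delta' n^4p_n$, hence exceeds $\frac{3}{2}\delta' n^4p_n=\delta n^4p_n$ with probability at most $\exp(-c'\delta' n^4p_n)$, and the union bound over $\mathcal{C}$ is again $o(1)$ for $K$ large. Condition on the w.h.p.\ event that both conclusions hold. Let $H\subseteq G$ be $T$-free with $e(H)\ge(3/32-\epsilon)\binom{n}{4}p_n$. By (i) there is $C\in\mathcal{C}$ with $H\subseteq C\cap G$; then $e(H)\le e(C\cap G)$ forces $C$ not to be sparse, so $C$ is structured, with partition $\mathcal{P}_C$. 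Each edge of $H$ that fails to be rainbow with respect to $\mathcal{P}_C$ is a non-rainbow edge of $C$ lying in $G$, and there are at most $\delta n^4p_n$ of those; so $\mathcal{P}_C$ is the desired partition for $H$ and the theorem follows.

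The main obstacle is delivering a container/transference family with $\log|\mathcal{C}|=O(n^{3})$ carrying \emph{no} extra logarithmic factor, for this is exactly what lets the hypothesis be $p_n\ge K/n$ rather than $p_n\ge Kn^{-1}\log n$. Concretely one must verify the balanced-supersaturation (codegree) hypotheses for the $3$-uniform hypergraph whose vertices are the $4$-subsets of $[n]$ and whose edges are the copies of $T$, check that its scaling is governed by the five-vertex subconfiguration above, and either run the container algorithm tightly with the number of copies of $T$ as control parameter (a bounded number of rounds, since $\zeta$ is a constant), or else invoke the sharper transference theorems of Schacht and of Conlon--Gowers, which pin the threshold at $n^{-1/m_4(T)}=n^{-1}$ directly. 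Everything else --- Pikhurko's stability theorem with explicit dependence of $\epsilon_0$ on $\eta$, the hypergraph removal lemma, and the Chernoff union bound --- is classical.
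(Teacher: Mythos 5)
You should first note that the paper does not prove this statement at all: Theorem~\ref{th2} is imported verbatim from \cite{Samotij}, whose general ``stability transference'' theorem (in the spirit of Schacht and Conlon--Gowers) specializes to $T$ once one supplies the dense ingredients --- the Tur\'an density $3/32$, Pikhurko's stability theorem, and the computation $m_4(T)=1$ coming from the two edges $\{1,2,3,4\},\{1,2,3,5\}$ on five vertices. Your outline is exactly that route, and the dense half of your argument (stability plus removal lemma to get the supersaturated/robust form, and the Chernoff-plus-union-bound finish) is sound.

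The one genuine soft spot is step (iii), the claim $\log|\mathcal{C}|\le b\,n^{3}$ with no logarithmic factor. The hypergraph container theorem produces containers indexed by fingerprints of size $O(n^{4-1/m_4(T)})=O(n^{3})$, so the number of containers is of order $\binom{n^4}{O(n^3)}$, i.e.\ $\log|\mathcal{C}|=O(n^{3}\log n)$; this loss comes from \emph{enumerating} the fingerprints, not from the number of iterations of the container algorithm, so ``a bounded number of rounds'' does not remove it. With that count your union bound needs $p_n\ge Kn^{-1}\log n$, which would still suffice for the application to Theorem~\ref{th1} (where $p>K\log n/n$ is assumed anyway) but does not prove Theorem~\ref{th2} as stated at $p_n\ge K/n$. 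Your second proposed fix --- invoking the sharp transference theorems of Schacht/Conlon--Gowers in their stability form --- is the correct and, as far as I can see, necessary one; that is precisely the content of \cite{Samotij}, and at that point your argument becomes a specialization of the cited result rather than an independent proof of it. So: right skeleton, correct identification of the obstacle, but the obstacle is real and is resolved only by appealing to the same machinery the paper cites.
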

Actually, asymptotic general structure statements
can also be concluded from the recent results of \cite{BMS, CFZ, CG,
Samotij}, et al..

The rest of the paper is organized as follows. In
Section 2, we introduce some more notation and prove some standard
properties of $G^4(n,p)$. The main theorem is proved in Section 3.
To simplify the formulas, we shall omit floor and ceiling signs when
they are not crucial. Undefined notation and terminology can be
found in \cite{Bondy}.

\section{Preliminaries}
Let  $G$ denote the 4-uniform random hypergraph $G^4(n, p)$. The
{\it size} of a hypergraph $H$, denoted by $|H|$,
is the number of hyperedges it contains. We denote by $q(G )$ the
size of a largest 4-partite subhypergraph of $G$. We simply write $x
= (1 \pm \epsilon)y$ when $(1 -\epsilon)y \leq x \leq (1
+\epsilon)y$. A vertex set partition $\Pi = (A_1,
A_2, A_3, A_4 )$ is {\it balanced} if $|A_i| = (1 \pm 10^{-10})n/4$
for all $i$. Given a partition $\Pi = (A_1, A_2, A_3, A_4 )$ and a
4-uniform hypergraph $H$, we say that an edge $e$ of $H$ is
\emph{crossing} if $e \cap A_i$ is non-empty for every $i$. We use
$H[\Pi]$ to denote the set of crossing edges of $H$.

The \emph{link hypergraph} $L(v)$ of a vertex $v$ in $G$ is the
3-graph with vertex set $V (G)$ and edge set $\{xyz : xyzv \in G\}$.
The \emph{crossing link hypergraph} $L_\Pi(v)$ of a vertex $v$ is
the subhypergraph of $L(v)$ whose edge set is $\{xyz$ : $xyzv$ is a
crossing edge of $G\}$. The \emph{degree} $d(v)$ of $v$ is the size
of $L(v)$, while the \emph{crossing degree}
$d_\Pi(v)$ of $v$ is the size of $L_\Pi(v)$. The \emph{common link
hypergraph} $L(u, v)$ of two vertices $u$ and $v$ is $L(u) \cap
L(v)$ and \emph{the common degree} $d(u, v)$ is the size of $L(u,
v)$. The \emph{common crossing link hypergraph} $L_\Pi(u, v)$ of two
vertices $u$ and $v$ is $ L_\Pi(u) \cap L_\Pi(v)$ and the
\emph{common crossing degree} $d_\Pi(u, v)$ is the size of $L_\Pi(u,
v)$. Given three vertices $u$, $v$ and $w$, their
\emph{co-neighborhood} $N(u, v,w)$ is the set $\{x : xuvw \in G\}$,
and the \emph{co-degree} of $u$, $v$ and $w$ is the number of
vertices in their co-neighborhood. Similarly, given two vertices $u$
and $v$, their co-neighborhood
 $N(u, v)$ is $\{xy : xyuv \in G\}$; the co-degree of $u$ and $v$ is the number of elements
in their co-neighborhood.

Given two disjoint sets $A$ and $B$, we use $[A, B]$ to denote the
set $\{a \cup b : a \in A, b \in B\}$. In this paper, $[A, B]$
usually consists of edges. Given a graph or hypergraph $G$, let
$G[A,B]$ denote the set $G \cap[A,B]$. We say a vertex partition
$\Pi$ with four classes, which we will call a 4-partition, is {\it
maximum} if $|G[\Pi]| =q(G)$. Let $F$ be a maximum $T$-free
subhypergraph of $G$. Clearly $q(G) \leq |F|$. Thus, to prove
Theorem \ref{th1}, it is sufficient to show that w.h.p. $|F|\leq
q(G)$. Moreover, we will prove that if $F$ is not 4-partite, then
w.h.p. $|F| < q(G)$.

In the following, some propositions of $G^4(n, p)$
will be stated. We will use the following Chernoff-type bound to
prove those propositions.

\begin{lem}[\cite{AS}]\label{lem3}
Let $Y$ be the sum of mutually independent indicator random variables, and let
$\mu = \mathbb{E}[Y]$, the expectation of $Y$. For all $\epsilon > 0$,
$$Pr[|Y - \mu| > \epsilon\mu] < 2e^{-c_\epsilon \mu},$$
where $c_\epsilon = \min\{-\ln (e^\epsilon(1 + \epsilon)^{-(1+\epsilon)}, \epsilon^2/2\}$.
\end{lem}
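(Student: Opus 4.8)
The plan is to establish the upper and lower tail bounds separately by the standard exponential–moment (Chernoff) method and then combine them by a union bound. Write $Y=\sum_{i=1}^{m}Y_i$ with the $Y_i$ mutually independent, $Pr[Y_i=1]=p_i$, so that $\mu=\sum_i p_i$. For the upper tail, fix $t>0$ and apply Markov's inequality to $e^{tY}$: $Pr[Y\ge(1+\epsilon)\mu]\le e^{-t(1+\epsilon)\mu}\,\mathbb{E}[e^{tY}]$. By independence $\mathbb{E}[e^{tY}]=\prod_i\bigl(1+p_i(e^t-1)\bigr)\le\prod_i e^{p_i(e^t-1)}=e^{\mu(e^t-1)}$, using $1+x\le e^x$ and $\sum_i p_i=\mu$. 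Choosing $t=\ln(1+\epsilon)$, which minimizes $e^t-1-t(1+\epsilon)$, gives
$$Pr[Y\ge(1+\epsilon)\mu]\le\bigl(e^{\epsilon}(1+\epsilon)^{-(1+\epsilon)}\bigr)^{\mu}=e^{-\mu\left[(1+\epsilon)\ln(1+\epsilon)-\epsilon\right]}.$$
Since $(1+\epsilon)\ln(1+\epsilon)-\epsilon=-\ln\!\bigl(e^{\epsilon}(1+\epsilon)^{-(1+\epsilon)}\bigr)\ge c_\epsilon$ (it is one of the two quantities whose minimum defines $c_\epsilon$), this is at most $e^{-c_\epsilon\mu}$.

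For the lower tail I would argue analogously with $e^{-tY}$, $t>0$: $Pr[Y\le(1-\epsilon)\mu]\le e^{t(1-\epsilon)\mu}\,\mathbb{E}[e^{-tY}]\le e^{\mu(e^{-t}-1)+t(1-\epsilon)\mu}$, and optimizing with $t=-\ln(1-\epsilon)>0$ yields, for $0<\epsilon<1$,
$$Pr[Y\le(1-\epsilon)\mu]\le e^{-\mu\left[(1-\epsilon)\ln(1-\epsilon)+\epsilon\right]};$$
for $\epsilon\ge1$ the event is empty (as $Y\ge0$), so the claimed bound is trivial. It then remains to verify the elementary inequality $(1-\epsilon)\ln(1-\epsilon)+\epsilon\ge\epsilon^{2}/2$ on $(0,1)$, which I would do by setting $g(\epsilon)=(1-\epsilon)\ln(1-\epsilon)+\epsilon-\epsilon^{2}/2$ and noting $g(0)=g'(0)=0$ and $g''(\epsilon)=\epsilon/(1-\epsilon)\ge0$, so $g$ is nonnegative. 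Hence $Pr[Y\le(1-\epsilon)\mu]\le e^{-\epsilon^{2}\mu/2}\le e^{-c_\epsilon\mu}$.

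Finally I would combine the two estimates: $\{|Y-\mu|>\epsilon\mu\}=\{Y>(1+\epsilon)\mu\}\cup\{Y<(1-\epsilon)\mu\}$, each event having probability at most $e^{-c_\epsilon\mu}$ by the previous paragraphs, so the union bound gives $Pr[|Y-\mu|>\epsilon\mu]\le2e^{-c_\epsilon\mu}$. The strict inequality in the statement follows from the strict inequality $1+x<e^{x}$ for $x\ne0$ when some $p_i\in(0,1)$, the fully degenerate cases (all $p_i\in\{0,1\}$) being handled directly. There is no serious obstacle here: the argument is classical — it is precisely the Chernoff bound recorded in \cite{AS} — and the only mildly delicate points are the calculus inequality $g\ge0$ for the lower tail and the bookkeeping that confirms both optimized exponents dominate the stated $c_\epsilon$.
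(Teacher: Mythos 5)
Your proof is correct. The paper offers no proof of this lemma at all --- it is quoted directly from Alon--Spencer \cite{AS} --- and what you have written is precisely the standard Chernoff argument recorded there: exponential moment bound plus Markov for each tail, the optimal choices $t=\ln(1+\epsilon)$ and $t=-\ln(1-\epsilon)$, the calculus inequality $(1-\epsilon)\ln(1-\epsilon)+\epsilon\ge\epsilon^2/2$ via the second derivative, and a union bound. All steps check out, including your handling of the case $\epsilon\ge 1$ for the lower tail and of the degenerate case for strictness, so there is nothing to add.
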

In the sequel, we use $c_\epsilon$ to denote the
constant in Lemma \ref{lem3}.
\begin{pro}\label{prop4}
For any $0<\epsilon < 1$, there exists a constant $K$ such that if $p > K\log n/n$, then
w.h.p. the co-degree of any triple of vertices in $G$ is $(1\pm \epsilon)pn$.
\end{pro}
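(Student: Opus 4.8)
The plan is to fix an arbitrary triple of vertices, write its co-degree as a sum of mutually independent indicator variables, apply the Chernoff-type bound of Lemma \ref{lem3}, and then take a union bound over all $\binom{n}{3}$ triples.

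First I would fix three distinct vertices $u,v,w \in V(G)$. For each of the remaining $n-3$ vertices $x$, let $X_x$ be the indicator of the event $\{uvwx \in G\}$. These variables are mutually independent, each with $\mathbb{E}[X_x]=p$, so the co-degree $Y = \sum_x X_x$ satisfies $\mu := \mathbb{E}[Y] = (n-3)p$. Since $(n-3)p = (1-3/n)pn$, for $n$ large enough we have $\mu = (1 \pm \epsilon/3)pn$, so it suffices to prove that w.h.p.\ $Y = (1 \pm \epsilon/3)\mu$ for \emph{every} triple; indeed $(1\pm\epsilon/3)^2 \subseteq (1\pm\epsilon)$ for $\epsilon<1$.

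Next, apply Lemma \ref{lem3} with error parameter $\epsilon/3$ to a single triple:
\[
\Pr\big[\,|Y-\mu| > (\epsilon/3)\mu\,\big] < 2e^{-c_{\epsilon/3}\,\mu} = 2e^{-c_{\epsilon/3}(n-3)p},
\]
where $c_{\epsilon/3}>0$ depends only on $\epsilon$. Taking a union bound over the at most $n^3$ triples, the probability that some triple has co-degree outside $(1\pm\epsilon/3)\mu$ is at most $2n^3 e^{-c_{\epsilon/3}(n-3)p}$. Using $p > K\log n/n$ we get $(n-3)p \ge \tfrac12 K \log n$ for $n$ large, so this is at most $2n^3 e^{-\frac12 c_{\epsilon/3} K \log n}$, which tends to $0$ once $K$ is chosen large enough relative to $1/c_{\epsilon/3}$ (for instance $K > 8/c_{\epsilon/3}$ comfortably suffices, regardless of the base of $\log$).

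As for the main obstacle: there is no deep one here — this is a routine first-moment/Chernoff estimate. The only points requiring a little care are (i) making the exponential decay from the Chernoff bound dominate the polynomial factor $\binom{n}{3} \le n^3$ from the union bound, which is exactly what forces $K$ to be taken sufficiently large in terms of $1/c_{\epsilon/3}$, and (ii) absorbing the $O(1/n)$ discrepancy between $(n-3)p$ and $pn$ by shrinking the error parameter from $\epsilon$ to $\epsilon/3$ before invoking the Chernoff bound. This same template (express a degree-type quantity as a sum of independent indicators, apply Lemma \ref{lem3}, union bound) will recur for the other propositions about $G^4(n,p)$, so it is worth setting up cleanly here.
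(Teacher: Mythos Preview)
Your proof is correct and follows essentially the same approach as the paper's: express the co-degree as a sum of independent indicators with mean $(n-3)p$, apply Lemma~\ref{lem3}, and take a union bound over the $O(n^3)$ triples, choosing $K$ large enough (the paper also takes $K>8/c_\epsilon$) so that the exponential beats the polynomial. The only cosmetic difference is that you explicitly shrink $\epsilon$ to $\epsilon/3$ to pass from $(n-3)p$ to $pn$, whereas the paper leaves that absorption implicit.
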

\begin{proof}
For each triple of vertices $u,v,w$, let $X_{u,v,w}$ denote the
number of vertices $x \in  V \setminus \{u,v,w\}$ such that $xuvw$
is an edge. Letting $\mu = \mathbb{E}[X_{u,v,w}]$, we have $\mu= p(n
-3)$, and by Lemma \ref{lem3},
$$Pr[|X_{u,v,w} -\mu| > \epsilon \mu] < 2e^{-c_\epsilon(n-3)p} < 2e^{-c_\epsilon \frac{np}{2}}.$$
If $K > 8/c_\epsilon$, then $e^{-c_\epsilon \frac{np}{2}} < n^{-4}$. By the union bound, it  follows that with probability at most $n^3n^{-4} = n^{-1}$, the event
 $|X_{u,v,w} - \mu| > \epsilon \mu$ holds for some $\{u,v,w\}$. Therefore, w.h.p. there
is no such $\{u,v,w\}$.
\end{proof}

\begin{pro}\label{prop5}
For any $0<\epsilon < 1$, there exists a constant $K$ such that if $p > K \log n/n$, then
w.h.p. the co-degree of any pair of vertices in $G$ is $(1\pm \epsilon)\frac{p}{2}n^2$.
\end{pro}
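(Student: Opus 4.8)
The plan is to repeat the argument of Proposition \ref{prop4}, now counting \emph{pairs} of vertices in the co-neighborhood rather than single vertices. Fix a pair of vertices $\{u,v\}$ and let $X_{u,v}$ denote the number of pairs $\{x,y\} \subseteq V \setminus \{u,v\}$ for which $\{x,y,u,v\} \in G$. Distinct pairs $\{x,y\}$ correspond to distinct potential edges of $G$, which are present independently, so $X_{u,v}$ is a sum of $\binom{n-2}{2}$ mutually independent indicator random variables and Lemma \ref{lem3} applies verbatim. Its expectation is $\mu = p\binom{n-2}{2}$, and for all sufficiently large $n$ one has $\tfrac14 p n^2 \le \mu \le \tfrac12 p n^2$, with $\mu = (1 \pm o(1))\tfrac{p}{2}n^2$; so it is enough to bound the deviation of $X_{u,v}$ from $\mu$ and then absorb the gap between $\mu$ and $\tfrac{p}{2}n^2$ into the error parameter.

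By Lemma \ref{lem3}, $\Pr\!\big[\,|X_{u,v} - \mu| > \epsilon\mu\,\big] < 2e^{-c_\epsilon \mu} \le 2e^{-c_\epsilon p n^2/4}$, and using $p > K\log n/n$ this is at most $2e^{-(c_\epsilon K/4)\, n \log n}$, which is below $n^{-3}$ (indeed below any fixed power of $n$) once $K$ is taken large enough, say $K > 12/c_\epsilon$, for all large $n$. A union bound over the fewer than $n^2$ pairs $\{u,v\}$ then shows that the probability that some pair has $|X_{u,v} - \mu| > \epsilon\mu$ is at most $n^2 \cdot n^{-3} = n^{-1} \to 0$. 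Hence w.h.p. every pair of vertices has co-degree $(1 \pm \epsilon)\tfrac{p}{2}n^2$; to be safe one runs the whole computation with $\epsilon/2$ in place of $\epsilon$ so that the replacement of $p\binom{n-2}{2}$ by $\tfrac{p}{2}n^2$ costs nothing.

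There is no real obstacle here; in fact the estimate is more comfortable than in Proposition \ref{prop4}, since $\mu$ is now quadratic in $n$, so the Chernoff bound overwhelms the union bound with room to spare. The only points deserving a word are the elementary inequality $\binom{n-2}{2} \ge \tfrac14 n^2$ for large $n$ and the harmless passage from the target $\tfrac{p}{2}n^2$ to the exact mean $p\binom{n-2}{2}$, both dealt with by shrinking $\epsilon$ and taking $n$ large.
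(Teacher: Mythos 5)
Your proof is correct and follows essentially the same route as the paper's: define $X_{u,v}$ as the number of pairs $\{x,y\}$ with $xyuv\in G$, observe it is a sum of $\binom{n-2}{2}$ independent indicators with mean $\mu=p\binom{n-2}{2}$, apply the Chernoff bound of Lemma~\ref{lem3}, and union-bound over $O(n^2)$ pairs. You are in fact slightly more careful than the paper in explicitly noting that $\mu=(1\pm o(1))\frac{p}{2}n^2$ must be reconciled with the stated estimate by shrinking $\epsilon$, a small point the paper glosses over.
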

\begin{proof}
For each pair of vertices $u$ and $v$, let
$X_{u,v}$ be the random variable given by the number of pairs of
vertices $x,y \in  V \setminus \{u,v\}$ such that $xuvy$ is an edge.
Letting $\mu = \mathbb{E}[X_{u,v}]$, we have $\mu = p\binom{n -
2}{2}$, and then by Lemma \ref{lem3},
$$Pr[|X_{u,v} - \mu| > \epsilon \mu] < 2e^{-c_\epsilon p\binom{n - 2}{2}} <2 e^{-c_\epsilon \frac{n^2p}{4}}.$$
If $K > 8/c_\epsilon$, then $e^{-c_\epsilon \frac{np}{2}} < n^{-4}$. By the union bound, it  follows that the probability
that $|X_{u,v,w} - \mu| > \epsilon \mu$ for some $\{u,v,w\}$ is at most $n^22e^{-c_\epsilon \frac{n^2p}{4}}=o(1)$.
\end{proof}

\begin{pro}\label{prop6}
For any $0<\epsilon < 1$, there exists a constant $K$ such that if $p > K \log n/n$, then
w.h.p. the common degree of any pair of vertices in $G$ is $(1\pm \epsilon)\frac{p^2}{6}n^3$.
\end{pro}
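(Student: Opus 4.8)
\emph{Proof proposal.} The plan is to mimic the proofs of Propositions \ref{prop4} and \ref{prop5}, the only genuinely new ingredient being the identification of a suitable family of \emph{mutually independent} indicator variables so that Lemma \ref{lem3} applies unchanged. Fix a pair of vertices $u,v$. For each $3$-element subset $T\subseteq V\setminus\{u,v\}$, let $Y_T$ be the indicator of the event that both $T\cup\{u\}$ and $T\cup\{v\}$ are edges of $G$, and set $X_{u,v}=\sum_T Y_T$. Then $X_{u,v}=d(u,v)$ is exactly the common degree of $u$ and $v$, since $T\in L(u,v)$ iff $T\cup\{u\},T\cup\{v\}\in G$. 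Each $Y_T$ is a $\{0,1\}$-valued random variable with $\mathbb{E}[Y_T]=p^2$, so $\mu:=\mathbb{E}[X_{u,v}]=p^2\binom{n-2}{3}=(1\pm o(1))\,\tfrac{p^2n^3}{6}$.

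The key observation is that the variables $\{Y_T\}$ are mutually independent. Indeed, $Y_T$ is a function only of the presence in $G$ of the two $4$-sets $T\cup\{u\}$ and $T\cup\{v\}$; for distinct triples $T\ne T'$ (both avoiding $u,v$) the four $4$-sets $T\cup\{u\},\,T\cup\{v\},\,T'\cup\{u\},\,T'\cup\{v\}$ are pairwise distinct (note $T\cup\{u\}=T'\cup\{v\}$ is impossible since $u\notin T'$). Hence the $Y_T$ are determined by pairwise disjoint blocks of the independent edge-indicators of $G^4(n,p)$, so $X_{u,v}$ is a sum of mutually independent indicator variables and Lemma \ref{lem3} gives, for any $\epsilon'>0$, $\Pr[\,|X_{u,v}-\mu|>\epsilon'\mu\,]<2e^{-c_{\epsilon'}\mu}$.

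To finish, I would fix $\epsilon'<\epsilon$ and absorb the $1\pm o(1)$ discrepancy between $\binom{n-2}{3}$ and $n^3/6$ into the gap between $\epsilon'$ and $\epsilon$, so that for $n$ large the event $|X_{u,v}-\mu|\le\epsilon'\mu$ forces $d(u,v)=(1\pm\epsilon)\tfrac{p^2n^3}{6}$. Since $p>K\log n/n$, we have $\mu\ge c_0 p^2n^3\ge c_0K^2\,n(\log n)^2$ for some absolute constant $c_0>0$ and all large $n$, hence $c_{\epsilon'}\mu\ge 3\log n$ eventually (for any fixed $K>0$), so $2e^{-c_{\epsilon'}\mu}\le n^{-3}$. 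A union bound over the fewer than $n^2$ pairs of vertices then shows that the probability that some pair of vertices has common degree outside $(1\pm\epsilon)\tfrac{p^2n^3}{6}$ is at most $n^2\cdot n^{-3}=o(1)$, which completes the proof.

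The computations are routine and parallel to those of Propositions \ref{prop4} and \ref{prop5}; the one point requiring care — and the step I would flag as the crux — is the verification that the right atomic objects to sum over are the pairs $(T\cup\{u\},\,T\cup\{v\})$ and that these yield genuinely independent summands across $T$, so that the Chernoff bound of Lemma \ref{lem3} can be invoked directly rather than through a more delicate second-moment or martingale argument.
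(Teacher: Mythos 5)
Your proposal is correct and follows essentially the same route as the paper: decompose $d(u,v)$ as a sum over triples of indicators for the event that both $T\cup\{u\}$ and $T\cup\{v\}$ are edges, apply the Chernoff bound of Lemma~\ref{lem3}, and union bound over the $O(n^2)$ pairs. You are in fact more careful than the paper in two respects worth keeping: you explicitly verify that the indicators $Y_T$ depend on pairwise disjoint blocks of edge-indicators (so mutual independence, required by Lemma~\ref{lem3}, really holds), and you handle the $\binom{n-2}{3}$ versus $n^3/6$ discrepancy by passing through a slightly smaller $\epsilon'$, a step the paper elides.
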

\begin{proof}
For two disjoint sets of vertices $\{u,v\}$ and
$\{a,b,c\}$, let $\mathcal{A}_{u,v}^{a,b,c}$ be the event $\{uabc
\in G, vabc \in G\}$, and let $X_{u,v}^{a,b,c}$
 be the indicator random variable of $\mathcal{A}_{u,v}^{a,b,c}$. Then
$d(u,v)=\sum\limits_{abc}{\mathcal{A}_{u,v}^{a,b,c}}$.
 Letting $\mu = \mathbb{E}[d(u,v)]$, we
have $\mu =p^2\binom{n - 2}{3}$ , and by Lemma \ref{lem3},
$$Pr[|d(u,v)-\mu| > \epsilon \mu] < 2e^{-c_\epsilon p\mu} < e^{-c_\epsilon \frac{n^3p^2}{12}}.$$
By the union bound, it  follows that the probability
that $|d(u,v) - \mu| > \epsilon \mu$ for some $\{u,v\}$ is at most $n^2e^{-c_\epsilon \frac{n^3p^2}{12}}=o(1)$.
\end{proof}

\begin{pro}\label{prop7}
For any $0<\epsilon < 1$, there exists a constant $K$ such that if $p > K \log n/n$, then
w.h.p.  for any vertex $v$ of $G$, its degree $d(v)$  is $(1\pm \epsilon)\frac{p}{6}n^3$.
\end{pro}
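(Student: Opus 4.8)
The plan is to repeat, almost verbatim, the Chernoff-plus-union-bound scheme used for Propositions \ref{prop4}--\ref{prop6}. Fix a vertex $v$. By definition $d(v)$ is the number of triples $\{x,y,z\}\subseteq V\setminus\{v\}$ with $xyzv\in G$. For each such triple let $X^v_{x,y,z}$ be the indicator random variable of the event $xyzv\in G$; then $d(v)=\sum_{\{x,y,z\}}X^v_{x,y,z}$ is a sum of $\binom{n-1}{3}$ mutually independent indicators, each equal to $1$ with probability $p$. Hence $\mu:=\mathbb{E}[d(v)]=p\binom{n-1}{3}$, and since $\binom{n-1}{3}=(1+o(1))n^3/6$, for $n$ large we have $\mu=(1\pm\epsilon/2)\frac{p}{6}n^3$, and in particular $\mu\ge pn^3/12$.

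Next I would apply Lemma \ref{lem3} to $d(v)$ with tolerance $\epsilon/3$, giving
$$Pr\!\left[\,\big|d(v)-\mu\big|>\tfrac{\epsilon}{3}\mu\,\right]<2e^{-c_{\epsilon/3}\mu}\le 2e^{-c_{\epsilon/3}pn^3/12}\le 2e^{-c_{\epsilon/3}Kn^2\log n/12},$$
where the last step uses $p>K\log n/n$. For any fixed $K>0$ the right-hand side is $o(n^{-1})$, so by the union bound over the $n$ choices of $v$ the probability that some vertex violates $|d(v)-\mu|\le\frac{\epsilon}{3}\mu$ is at most $2n\,e^{-c_{\epsilon/3}Kn^2\log n/12}=o(1)$. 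On the complementary event every vertex $v$ satisfies $d(v)=(1\pm\frac{\epsilon}{3})\mu=(1\pm\frac{\epsilon}{3})(1\pm\frac{\epsilon}{2})\frac{p}{6}n^3=(1\pm\epsilon)\frac{p}{6}n^3$, which is the assertion.

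There is no real obstacle here: the statement is strictly easier than Propositions \ref{prop5} and \ref{prop6}, because a vertex degree is already concentrated under the much weaker hypothesis $pn^3\to\infty$, and one only needs a union bound over $n$ (rather than $n^2$ or $n^3$) events. The single point that deserves a line of care is keeping the two error terms apart --- the deviation of $d(v)$ from its mean provided by the Chernoff bound, and the discrepancy between $p\binom{n-1}{3}$ and $\frac{p}{6}n^3$ --- which is why I split the budget $\epsilon$ into $\epsilon/3$ and $\epsilon/2$ above; alternatively one can apply Lemma \ref{lem3} directly with tolerance $\epsilon$ and invoke $\binom{n-1}{3}/(n^3/6)\to 1$.
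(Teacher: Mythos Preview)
Your proposal is correct and follows the same Chernoff-plus-union-bound scheme as the paper's proof; the only difference is that you explicitly split the error budget to pass from $\mu=p\binom{n-1}{3}$ to $\frac{p}{6}n^3$, whereas the paper applies Lemma~\ref{lem3} directly with tolerance $\epsilon$ and leaves that approximation implicit.
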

\begin{proof}
For each  vertex $v$, let $X_{v}$ be the random variable given
by the number of triples of  vertices $x,y,z \in  V \setminus \{v\}$ such that $vxyz$ is an edge. Letting $\mu = \mathbb{E}[X_{v}]$, we
have $\mu = p\binom{n - 1}{3}$ , and by Lemma \ref{lem3},
$$Pr[|X_{v} - \mu| > \epsilon \mu] < 2e^{-c_\epsilon p \mu} .$$
 By the union bound, it therefore follows that the probability
that $|d(v) - \mu| > \epsilon \mu$ holds for some $v$ is at most $n2e^{-c_\epsilon \mu}=o(1)$.
\end{proof}

\begin{pro}\label{prop8}
For any $0<\epsilon < 1$, there exists a constant $K$ such that if $p > K \log n/n$, then
w.h.p.  for any 4-partition $\Pi=(A_1,A_2,A_3,A_4)$ with $|A_2|$, $|A_3|$, $|A_4|\geq\frac{n}{80}$, and any vertex $v\in A_1$  we have $d_\Pi(v)=(1\pm \epsilon)p|A_2||A_3||A_4|$.
\end{pro}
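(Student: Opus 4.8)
\noindent The plan is to fix the partition and the vertex, apply the Chernoff bound of Lemma~\ref{lem3}, and then take a union bound over all 4-partitions and all vertices. First I would observe that for a fixed 4-partition $\Pi=(A_1,A_2,A_3,A_4)$ with $|A_2|,|A_3|,|A_4|\ge n/80$ and a fixed $v\in A_1$, a crossing edge of $G$ containing $v$ must consist of $v$ together with exactly one vertex from each of $A_2,A_3,A_4$: the remaining three vertices of the edge have to meet the three classes $A_2,A_3,A_4$, and three vertices cannot do this unless they are distributed one per class. Hence $d_\Pi(v)$ is the sum of the $|A_2||A_3||A_4|$ mutually independent indicator random variables $\mathbf{1}[vxyz\in G]$ over $(x,y,z)\in A_2\times A_3\times A_4$, each equal to $1$ with probability $p$, so that $\mu:=\mathbb{E}[d_\Pi(v)]=p|A_2||A_3||A_4|\ge p(n/80)^3$.

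Next I would apply Lemma~\ref{lem3} to obtain $Pr[d_\Pi(v)\neq(1\pm\epsilon)\mu]<2e^{-c_\epsilon\mu}\le 2e^{-c_\epsilon p n^3/80^3}$. Using $p>K\log n/n$, this bound is at most $2e^{-(c_\epsilon K/80^3)\,n^2\log n}=2\,n^{-(c_\epsilon K/80^3)\,n^2}$. There are at most $4^n$ choices of $\Pi$ (each vertex is assigned to one of four classes) and at most $n$ choices of $v$, so by the union bound the probability that some admissible pair $(\Pi,v)$ violates the estimate $d_\Pi(v)=(1\pm\epsilon)p|A_2||A_3||A_4|$ is at most $2n\cdot 4^n\cdot n^{-(c_\epsilon K/80^3)\,n^2}$. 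Taking logarithms, the exponent is $\log 2+\log n+n\log 4-(c_\epsilon K/80^3)\,n^2\log n$, which tends to $-\infty$ for any fixed $K>0$; in particular the whole expression is $o(1)$ as soon as $K$ is at least the constant appearing in the earlier propositions of this section, and the claim follows.

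The only point requiring care is that the union bound ranges over $4^n$ partitions rather than over polynomially many events, so the Chernoff tail must be superexponentially small in $n$. This is automatic here because $\mu\gtrsim pn^3\gtrsim K n^2\log n\gg n$, so the exponent $c_\epsilon\mu$ dwarfs $n\log 4$; it is precisely the lower bound $|A_i|\ge n/80$ that guarantees $\mu$ is this large. I do not expect a genuine obstacle — this is the most routine of the propositions here — but one should double-check the combinatorial reduction to a clean sum of independent indicators, i.e. that with $v$ already occupying class $A_1$ the other three vertices of a crossing edge are forced one into each of $A_2,A_3,A_4$.
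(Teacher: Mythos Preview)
Your argument is correct and follows essentially the same approach as the paper: compute $\mu=\mathbb{E}[d_\Pi(v)]=p|A_2||A_3||A_4|\ge pn^3/80^3$, apply the Chernoff bound of Lemma~\ref{lem3}, and then take a union bound over the at most $4^n\cdot n$ choices of $(\Pi,v)$, noting that $c_\epsilon\mu\gg n$. The additional combinatorial remark you make (that the three non-$v$ vertices of a crossing edge through $v\in A_1$ are forced one into each of $A_2,A_3,A_4$) is a useful clarification that the paper leaves implicit.
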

\begin{proof}
Since $|A_2|, |A_3|,|A_4|\ge n/80$, so $|A_2||A_3||A_4|\ge n^3/80^3$. Let $\mu = \mathbb{E}[d_{\Pi}(v)] =|A_2||A_3||A_4|p$. We have
$$Pr[|d_{\Pi}(v) - \mu|> \epsilon \mu] < 2e^{-c_\epsilon \frac{n^3p}{80^3}}.$$
Then by the union bound, the probability that the
statement does not hold is bounded by
$$4^nn Pr[|d_{\Pi}(v) - \mu|> \epsilon \mu]  < e^{3n + \log n -c_\epsilon \frac{n^3p}{80^3}}=o(1).$$
\end{proof}

For a vertex $v$ and a vertex set $S$, let
$\mathcal{E}$ be a subset of $\{vxab \in G : x \in S \}$ satisfying
that for any $x \in S$, there exists $W \in
\mathcal{E}$ such that $x \in W$. Let $Q$  be a
subset of $L(v)$. Define $G_{v,\mathcal{E}}[S, Q ] = \{xuwz \in G :
x \in  S, uwz \in Q, \exists W \in \mathcal{E}$ $ s.t. \ x \in W,\
u,w,z \notin W \}$. Then for any $xuwz\in G_{v,\mathcal{E}}[S, Q ]$
with $x \in S$ and $uwz \in Q$, we can find a $T = \{vuwz, xuwz,
vxab\}$, where $vxab \in \mathcal{E}$. The condition $u,w,z \notin W
$ in the definition of $G_{v,\mathcal{E}}[S, Q ]$ guarantees that we
can find such a $T$.

\begin{pro}\label{prop9}
For any $0<\epsilon$,  $\epsilon_1$, $\epsilon_2< 1$, there exists a
constant $K$ such that if $p > K \log n/n$, then w.h.p.  for any
choice of  $\{v,S,\mathcal{E},Q\}$ as above with $|S|=\epsilon_1n$,
$|Q|=\epsilon_2pn^3$, we have $|G_{v,\mathcal{E}}[S, Q ]|=(1\pm
\epsilon)p|S||Q|$.
\end{pro}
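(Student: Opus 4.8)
The plan is to apply the Chernoff bound of Lemma \ref{lem3} to $|G_{v,\mathcal{E}}[S,Q]|$ for a \emph{fixed} choice of $\{v,S,\mathcal{E},Q\}$, and then take a union bound over all such choices. First I would check that $|G_{v,\mathcal{E}}[S,Q]|$ is a sum of mutually independent indicator random variables: for each potential edge $xuwz$ with $x\in S$ and $uwz\in Q$ such that there exists $W\in\mathcal{E}$ with $x\in W$ and $u,w,z\notin W$, let $Z_{xuwz}$ be the indicator that $xuwz\in G$. These indicators are indexed by distinct $4$-subsets of $[n]$, hence mutually independent; moreover, crucially, whether a quadruple $xuwz$ is \emph{eligible} to appear in $G_{v,\mathcal{E}}[S,Q]$ depends only on $\{v,S,\mathcal{E},Q\}$ and not on which edges of $G$ are present among the quadruples counted --- so conditioning on $\{v,S,\mathcal{E},Q\}$ the sum is genuinely a sum of independent indicators. (One subtlety: $\mathcal E$ and $Q$ are themselves subsets of $G$, so I would either treat them as fixed sets first and only afterwards intersect with the relevant part of $G$, using the independence of disjoint edge slots; the quadruples $xuwz$ counted in $G_{v,\mathcal E}[S,Q]$ are disjoint as a class from the edges $vxab\in\mathcal E$ only when they genuinely differ, which the condition $u,w,z\notin W$ does not by itself guarantee, so I would need to argue the overlap is negligible or handle the dependency carefully.)

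Next I would compute the expectation $\mu = \mathbb{E}[|G_{v,\mathcal E}[S,Q]|]$ for fixed data. The number of eligible quadruples $xuwz$ is, up to lower-order terms, $|S|\cdot|Q|$: for each $uwz\in Q$ (there are $|Q|=\epsilon_2 pn^3$ of them) and each $x\in S$ (there are $|S|=\epsilon_1 n$ of them), the pair contributes a candidate edge unless no $W\in\mathcal E$ contains $x$ with $u,w,z\notin W$. Using the covering hypothesis on $\mathcal E$ (every $x\in S$ lies in some $W\in\mathcal E$) together with the fact that a fixed triple $uwz$ can intersect only $O(pn^2)$ of the sets $W$ --- which itself needs a counting bound, or alternatively we just note that removing the "bad" pairs changes the count by a lower-order amount --- one gets that the number of eligible quadruples is $(1\pm \epsilon/3)|S||Q|$, and hence $\mu = (1\pm\epsilon/3)p|S||Q| = (1\pm\epsilon/3)\epsilon_1\epsilon_2\,p^2 n^4$. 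Since $p> K\log n/n$, we have $\mu = \Omega(p^2 n^4) = \Omega(n^2\log^2 n / K^{?})$, in any case $\mu \gg n\log n$, which is what makes the Chernoff tail beat the union bound below.

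Then Lemma \ref{lem3} gives, for fixed $\{v,S,\mathcal E,Q\}$,
$$\Pr\bigl[\,\bigl|\,|G_{v,\mathcal E}[S,Q]| - \mu\,\bigr| > (\epsilon/2)\mu\,\bigr] < 2e^{-c_{\epsilon/2}\mu} < 2e^{-c'\epsilon_1\epsilon_2 p^2 n^4}$$
for a suitable constant $c'$. The union bound must range over: $n$ choices of $v$; at most $2^n$ choices of $S$; at most $2^{n^4}$ choices of $Q\subseteq L(v)$ (since $|L(v)|\le\binom n3$); and at most $2^{\binom n4}\le 2^{n^4}$ choices of $\mathcal E$. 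So the total number of configurations is at most $n\cdot 2^{n}\cdot 2^{n^4}\cdot 2^{n^4} = 2^{O(n^4)}$. The probability of a bad configuration existing is then at most $2^{O(n^4)}\cdot 2e^{-c'\epsilon_1\epsilon_2 p^2 n^4}$, and since $p^2 n^4 = (pn)\cdot pn^3 \ge K\log n\cdot pn^3 \gg n^4$ once $K$ is large (using $p > K\log n/n$, so $p^2 n^4 > K^2 \log^2 n\cdot n^2 \gg n^4$ for $n$ large), this is $o(1)$ provided $K$ is chosen large enough that $c'\epsilon_1\epsilon_2\,K^2\log^2 n\cdot n^2$ dominates the $O(n^4)$ in the exponent --- wait, that needs care, since $n^2\log^2 n$ does \emph{not} dominate $n^4$. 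The main obstacle is precisely this: the naive union bound over all $2^{n^4}$ subsets $Q$ and all $2^{\binom n4}$ sets $\mathcal E$ is far too lossy to be absorbed by $e^{-\Theta(p^2n^4)}$ when $p$ is only $\Theta(\log n/n)$, since then $p^2 n^4 = \Theta(n^2\log^2 n) \ll n^4$.

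To overcome this I would not union-bound over all subsets $Q$ and $\mathcal E$ directly; instead I would fix the \emph{edge set of $G$} restricted to $L(v)$ and to the $\mathcal E$-relevant slots first (there are at most $e^{O(n^3)}$ and $e^{O(n^4)}$ outcomes here, but these are \emph{not} unioned — rather, one conditions), and observe that once the quadruples available as potential members of $G_{v,\mathcal E}[S,Q]$ are determined by $\{v,S,\mathcal E, Q\}$, the randomness left is in the $\binom n4$ slots for quadruples $xuwz$ with all four vertices distinct from the $v$-slots, which are independent of everything used to build $\mathcal E$ and $Q$. The cleanest route, mirroring how such statements are proved in \cite{BBHL}, is: for fixed $v$ and $S$ (only $n\cdot 2^n = e^{O(n)}$ of these), show that \emph{uniformly over all choices of $\mathcal E$ and $Q$ of the prescribed sizes}, the count concentrates; this uniformity is obtained by a second-moment / martingale argument, or by noting that $G_{v,\mathcal E}[S,Q]$ is monotone in $Q$ and sandwiching between extremes, rather than by a crude union bound. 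Alternatively, and most likely what the authors do, one uses that $Q\subseteq L(v)$ and $\mathcal E\subseteq G$ are \emph{themselves subgraphs of the already-revealed $G$}, so the relevant probability space is just the edges $xuwz$ with $xuwz$ not sharing all vertices with revealed slots; revealing $G$ in two rounds (first the $v$-incident and $Q$-relevant edges, then the rest) makes $|G_{v,\mathcal E}[S,Q]|$, conditioned on round one, a sum of $(1\pm\epsilon/3)|S||Q| = \Theta(p^2 n^4)$ independent indicators, and the union bound then only runs over $v$ and $S$ (cost $e^{O(n)}$) times the number of round-one outcomes that could arise — but since we want the statement to hold for the \emph{actual} $G$, we condition on round one rather than union over it, incurring no cost. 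The one honest union bound is over $\{v, S\}$, costing $e^{O(n)}$, which is comfortably beaten by $e^{-\Theta(p^2 n^4)} = e^{-\Theta(n^2\log^2 n)}$ once $K$ is large enough, and I would spell out the two-round exposure carefully as that is the delicate point.
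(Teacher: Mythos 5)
Your diagnosis of why the naive union bound over all $2^{\Theta(n^4)}$ subsets $\mathcal E$ and $Q$ fails is exactly right, and the two-round exposure is the right intuition, but your final accounting drops a factor that cannot be dropped. Conditioning on the first round (the edges incident to $v$) still leaves exponentially many admissible pairs $(\mathcal E,Q)$ to handle, because the Proposition quantifies over \emph{all} such pairs of the prescribed sizes; the adversary picks $\mathcal E$ and $Q$ after seeing $G$, so you must union over them even after conditioning. The paper's actual move is to union over $\mathcal E$ and $Q$ but to observe that w.h.p.\ they come from a much smaller universe: by Proposition~\ref{prop5} there are only about $\epsilon_1 pn^3$ edges $vxab\in G$ with $x\in S$, giving at most $2^{\epsilon_1 pn^3}$ choices of $\mathcal E$, and by Proposition~\ref{prop7} one has $|L(v)|\le\frac13 pn^3$, giving at most $\binom{pn^3/3}{\epsilon_2 pn^3}$ choices of $Q$. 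The union-bound cost is therefore $e^{O(pn^3)}$, not $e^{O(n)}$ as you claim, and the comparison that must be won is $pn^3\ll p^2 n^4$, i.e.\ $pn\gg 1$, which holds once $pn>K\log n$. Your assertion that ``the one honest union bound is over $\{v,S\}$, costing $e^{O(n)}$'' is therefore wrong as stated, though the conclusion survives once the correct $e^{O(pn^3)}$ count is substituted in.

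Two lesser points. Your worry about the counted quadruples $xuwz$ possibly coinciding with edges of $\mathcal E$ is unfounded: every edge of $\mathcal E$ contains $v$, whereas no counted $xuwz$ does (since $x\in S$ and $u,w,z$ are vertices of a triple in $L(v)$, hence all four are distinct from $v$), so the two edge-classes live in disjoint $4$-subsets and the second-round independence is clean with no overlap to control. And the paper makes your ``removing bad pairs is lower order'' step precise by splitting on whether $d_{\mathcal E}(x)>6pn$: if so, Proposition~\ref{prop4} guarantees some $W\in\mathcal E$ containing $x$ and avoiding $\{u,w,z\}$ for every $uwz\in Q$, so $[x,Q]$ is entirely eligible; if not, the set $Q_x$ of possibly-blocked triples satisfies $|Q_x|\le 12p^2n^3$ by Proposition~\ref{prop5}, and summing over $x\in S$ gives that the number of eligible quadruples is at least $|S||Q|-12\epsilon_1 p^2 n^4=(1-o(1))|S||Q|$.
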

\begin{proof}
Let $K_{v,\mathcal{E}}[S, Q ] = \{xuwz : x \in  S, uwz \in Q,
\exists W \in \mathcal{E}$ $ s.t. x \in W,\ u,w,z \notin W \}$. Then
for $K_{v,\mathcal{E}}[S, Q ]$ and $\mathbb{E}[G_{v,\mathcal{E}}[S,
Q ] ]$, we have the relation as follows.
$$\mathbb{E}[G_{v,\mathcal{E}}[S, Q ] ]=p |K_{v,\mathcal{E}}[S, Q ]|.$$
For $x \in S$, let $d_{\mathcal{E}}(x)=|\{W \in \mathcal{E}:x \in
W\}|$ and $Q_x=\{abc \in Q: vxay \in \mathcal{E}\ for\ some\ vertex\
y\}$. If $d_{\mathcal{E}}(x)>6pn$, then clearly $[x,Q]\subseteq
K_{v,\mathcal{E}}[S, Q ]$. If $d_{\mathcal{E}}(x)\leq 6pn$, then by
Proposition \ref{prop5} we have $|Q_x|\leq 12pn \times 2
\frac{pn^2}{2}=12p^2n^3$. Clearly $[x,Q\backslash Q_x] \subseteq
K_{v,\mathcal{E}}[S, Q ]$. Hence, $$|[S,Q]|-|K_{v,\mathcal{E}}[S, Q
]|\leq \sum_{x \in S, d_{\mathcal{E}}(x)\leq 6pn}|Q_x|\leq |S|\times
12p^2n^3 = 12\epsilon_1p^2n^4.$$ On the other hand, we have
$|[S,Q]|=|S||Q| = \epsilon_1\epsilon_2pn^4$, so
$|K_{v,\mathcal{E}}[S, Q ]|=(1-o(1))|S||Q|$. Let
$\mu=\mathbb{E}[G_{v,\mathcal{E}}[S, Q ] ]=p|K_{v,\mathcal{E}}[S, Q
]|=(1-o(1))p|S||Q|$. By Lemma \ref{lem3} we have
$$Pr[||G_{v,\mathcal{E}}[S, Q ] |-\mu|> \epsilon \mu] <2e^{-c_\epsilon\mu}.$$
We have at most $n$ choices for $v$, $\binom{n}{\epsilon_1n}$ choices for $S$, $2^{pn^2\cdot \epsilon_1n}$ choices for $\mathcal{E}$ and $\binom{\frac{1}{3}pn^3}{\epsilon_2pn^3}$ choices for $Q$. Then by the union bound, the probability that the statement of Proposition \ref{prop9} does not hold is bounded by
$$n\binom{n}{\epsilon_1n}2^{pn^2\epsilon_1n}
\binom{\frac{1}{3}pn^3}{\epsilon_2pn^3}2e^{-c_\epsilon \epsilon_1\epsilon_2  p^2n^4} =o(1).$$
\end{proof}

\begin{pro}\label{prop10}
For any $0<\epsilon < 1$, there exists a constant $K$ such that if $p >
K \log n/n$, then w.h.p. the following holds: if $F$ is a maximum
$T$-free subhypergraph of $G$ and $\Pi$ is a 4-partition
 maximizing $|F[\Pi]|$, then $|F| \geq (\frac{3}{32} -\epsilon)\binom{n}{4}p$, and $\Pi$ is a balanced partition.
\end{pro}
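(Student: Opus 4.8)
The plan is to establish the two assertions in turn, using the constructive lower bound for $q(G)$ to drive both. First I would show that with high probability $q(G) \ge (\tfrac{3}{32}-\epsilon)\binom{n}{4}p$. To see this, fix any balanced partition $\Pi_0 = (A_1,A_2,A_3,A_4)$ of $[n]$ (say, an arbitrary partition into four parts of size $n/4$). The expected number of crossing edges of $G$ with respect to $\Pi_0$ is $p|A_1||A_2||A_3||A_4| = (1+o(1))p\,(n/4)^4 = (1+o(1))\tfrac{3}{32}\binom{n}{4}p$, since $(n/4)^4 = \tfrac{1}{256}n^4$ and $\binom{n}{4} = (1+o(1))\tfrac{1}{24}n^4$, giving the ratio $24/256 = 3/32$. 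A single application of the Chernoff bound of Lemma \ref{lem3} (the crossing edges being a sum of independent indicator variables, one per crossing $4$-set) shows this count is concentrated within a $(1\pm\epsilon/2)$ factor of its mean with probability $1 - o(1)$, because the mean is of order $n^4 p \gg \log n$. Since a crossing subhypergraph is $4$-partite and in particular $T$-free, we get $|F| \ge q(G) \ge |G[\Pi_0]| \ge (\tfrac{3}{32}-\epsilon)\binom{n}{4}p$ w.h.p., which is the first claim (after relabelling $\epsilon$).

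Next I would prove that the optimal partition $\Pi$ (the one maximizing $|F[\Pi]|$) is balanced. Here I invoke Theorem \ref{th2}: applying it with a suitably small $\delta$ (to be fixed below) yields constants so that w.h.p. the $T$-free subhypergraph $F$, which by the previous paragraph has at least $(\tfrac{3}{32}-\epsilon)\binom{n}{4}p$ edges, admits \emph{some} partition $(V_1,V_2,V_3,V_4)$ with all but at most $\delta n^4 p$ edges crossing. In particular $|F[(V_i)]| \ge |F| - \delta n^4 p \ge (\tfrac{3}{32}-\epsilon - 24\delta)\binom{n}{4}p$, so the maximizing partition $\Pi$ satisfies $|F[\Pi]| \ge (\tfrac{3}{32} - \epsilon')\binom{n}{4}p$ for $\epsilon' = \epsilon + 24\delta$. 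Now suppose for contradiction that $\Pi = (A_1,A_2,A_3,A_4)$ is not balanced, say $|A_1| = \alpha n$ with $\alpha$ outside $\tfrac14(1 \pm 10^{-10})$. The number of \emph{crossing} edges of the whole host $G$ with respect to $\Pi$ is at most $(1+o(1))p|A_1||A_2||A_3||A_4|$ (by a Chernoff/union-bound argument over all $4^n$ partitions, exactly as in Proposition \ref{prop8}), and $|F[\Pi]| \le |G[\Pi]|$. By the AM–GM inequality, $|A_1||A_2||A_3||A_4| \le (n/4)^4$, with the product bounded away from $(n/4)^4$ by a fixed multiplicative constant $1 - \eta(10^{-10}) < 1$ whenever one part has size $\alpha n$ with $|\alpha - 1/4| \ge 10^{-10}/4$. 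Hence $|F[\Pi]| \le (1+o(1))(1-\eta)\tfrac{3}{32}\binom{n}{4}p$. Choosing $\delta$ (and hence $\epsilon'$) small enough that $(1-\eta)\tfrac{3}{32} < \tfrac{3}{32} - \epsilon'$ contradicts the lower bound $|F[\Pi]| \ge (\tfrac{3}{32}-\epsilon')\binom{n}{4}p$, so $\Pi$ must be balanced.

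The main obstacle is the quantitative bookkeeping in the second part: one must choose $\delta$ (feeding into Theorem \ref{th2}) and the target $\epsilon$ in the statement in the correct order so that the "loss" $24\delta$ from non-crossing edges plus the original slack $\epsilon$ stays strictly below the gap $\eta \cdot \tfrac{3}{32}$ created by imbalance. Concretely, one fixes the imbalance threshold $10^{-10}$, reads off the resulting AM–GM gap $\eta = \eta(10^{-10}) > 0$, then picks $\delta$ with $24\delta < \tfrac{\eta}{3}\cdot\tfrac{3}{32}$ and proves the proposition for all $\epsilon < \tfrac{\eta}{3}\cdot\tfrac{3}{32}$; larger $\epsilon$ only make the statement weaker. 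A minor subtlety is that the uniform control of $|G[\Pi]|$ over all $4^n$ partitions requires $n^3 p \gg n$, i.e. $p \gg 1/n^2$, which is amply satisfied by the hypothesis $p > K\log n/n$; the union-bound computation is identical in form to that in the proof of Proposition \ref{prop8}. Everything else is a routine assembly of the Chernoff bound, the union bound, and Theorem \ref{th2}.
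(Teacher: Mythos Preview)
Your approach is essentially identical to the paper's: obtain the lower bound on $|F|$ from a single equipartition via Chernoff, then use Theorem~\ref{th2} to force $|F[\Pi]|\ge(\tfrac{3}{32}-\epsilon')\binom{n}{4}p$ for the maximizing $\Pi$, and finally derive a contradiction from an upper bound on $|G[\Pi]|$ when $\Pi$ is unbalanced.

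There is one small gap, however. Your assertion that $|G[\Pi]|\le(1+o(1))p|A_1||A_2||A_3||A_4|$ holds uniformly over \emph{all} $4^n$ partitions is not correct as stated: the Chernoff/union-bound argument of Proposition~\ref{prop8} explicitly assumes $|A_2|,|A_3|,|A_4|\ge n/80$, so that the mean $|A_1||A_2||A_3||A_4|p$ is of order $n^4p$ and dominates the $4^n$ union-bound factor. When some part is tiny the concentration genuinely fails (for instance if $|A_1|=|A_2|=|A_3|=1$ and the unique crossing $4$-set happens to be an edge, then $|G[\Pi]|=1\gg p$). Your remark that ``$n^3p\gg n$'' suffices misidentifies the needed condition. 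The paper fixes this with an easy case split: if every part has size at least $n/80$, invoke Proposition~\ref{prop8} and the AM--GM gap as you do; if some $|A_i|<n/80$, use the crude bound $|G[\Pi]|\le |A_i|\cdot\max_v d(v)\le (n/80)(1+\epsilon)\tfrac{p}{6}n^3$ via Proposition~\ref{prop7}, which is already below $(\tfrac{3}{32}-2\epsilon)\binom{n}{4}p$. With this patch your argument is complete and matches the paper's.
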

\begin{proof}
It suffices to prove it when $\epsilon$ is small. For a partition
$\Pi=(A_1,A_2,A_3,A_4)$, it is clear that  $|F| \geq q(G) \geq
|G[\Pi]|$. And Proposition \ref{prop8} implies that w.h.p.
$|G[\Pi]|=(1\pm \epsilon)p|A_1||A_2||A_3||A_4|$ if $|A_2|$, $|A_3|$,
$|A_4| \geq \frac{n}{80}$. Consider a partition satisfying
$|A_1|=|A_2|=|A_3|=|A_4|=\frac{n}{4}$, then we have $|F| \geq
(\frac{3}{32}-\epsilon)\binom{n}{4}p$. Also, Theorem \ref{th2}
implies that if $\Pi$ maximizes $|F[\Pi]|$, then $|G[\Pi]|\geq
|F[\Pi]| \geq (\frac{3}{32}-2\epsilon)\binom{n}{4}p$.

If $\Pi$ is not a balanced partition and $|A_2|$, $|A_3|$, $|A_4| \geq \frac{n}{80}$, then $|G[\Pi]| \leq (1+ \epsilon)p|A_1||A_2||A_3||A_4|< (\frac{3}{32}-2\epsilon)\binom{n}{4}p$. If $\Pi$ is not balanced and one of $|A_1|, |A_2|, |A_3|, |A_4|$ is less than $ \frac{n}{80}$, then Proposition \ref{prop7} implies that $|G[\Pi]|< \frac{n}{80} (1+ \epsilon)\frac{1}{6}pn^3<(\frac{3}{32}-2\epsilon)\binom{n}{4}p $. Therefore, if $\Pi$ maximizes $|F[\Pi]|$, then $\Pi$ is balanced.
\end{proof}

Let $\alpha = 0.35$. Given a balanced partition
$\Pi=(A_1,A_2,A_3,A_4)$, let $P(\Pi) = \{(u, v) \in \binom{A_1}{2} :
d_\Pi(u, v) < \frac{\alpha}{32}p^2n^3\}$. In other
words, $P(\Pi)$ is the set of pairs of vertices in $A_1$ that have
low common crossing degree.

\begin{pro}\label{prop11}
There exists a constant $K$ such that if $p > K \log n/n$, then
w.h.p. for every balanced partition $\Pi$, every vertex $v$ and every positive constant $\xi > 0$, we have $d_{P(\Pi)}(v) < \frac{\xi}{p}$, where $d_{P(\Pi)}(v)$ denotes the number of elements containing $v$ in $P(\Pi)$.
\end{pro}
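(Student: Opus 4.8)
The plan is to prove something stronger than asked: that w.h.p. $d_{P(\Pi)}(v)=0$ for every balanced partition $\Pi$ and every vertex $v$, which trivially gives $d_{P(\Pi)}(v)<\xi/p$ for every $\xi>0$. Fix a balanced partition $\Pi=(A_1,A_2,A_3,A_4)$ and a vertex $v$; if $v\notin A_1$ then no element of $P(\Pi)$ contains $v$ and we are done, so assume $v\in A_1$. For another vertex $u\in A_1$, every crossing edge through $u$ or through $v$ uses exactly one vertex from each of $A_2,A_3,A_4$, so
\[
d_\Pi(u,v)=\sum_{(x,y,z)\in A_2\times A_3\times A_4}Z_{xyz},\qquad Z_{xyz}:=\mathbf{1}[\,xyzu\in G\,]\cdot\mathbf{1}[\,xyzv\in G\,].
\]
Each $Z_{xyz}$ is a $0/1$ random variable with $\mathbb{E}[Z_{xyz}]=p^2$, and for distinct triples these variables depend on disjoint sets of potential edges of $G$, hence are mutually independent. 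Consequently $\mu:=\mathbb{E}[d_\Pi(u,v)]=p^2|A_2||A_3||A_4|\ge (1-10^{-10})^3\,p^2 n^3/64$, since $\Pi$ is balanced.

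I would then compare the threshold in the definition of $P(\Pi)$ with $\mu$. As $\alpha=0.35$, the threshold equals $\frac{\alpha}{32}p^2n^3=0.7\cdot\frac{p^2n^3}{64}$, while $\mu\ge(1-10^{-10})^3\frac{p^2n^3}{64}$; hence there is a \emph{fixed} constant $\delta$ with $1/4<\delta<1$ (one may take $\delta=1-0.7\,(1-10^{-10})^{-3}$) for which $\frac{\alpha}{32}p^2n^3\le(1-\delta)\mu$ for every balanced $\Pi$ and all large $n$. Applying Lemma~\ref{lem3} to $Y=d_\Pi(u,v)$ with parameter $\delta$ gives
\[
\Pr\big[(u,v)\in P(\Pi)\big]\le\Pr\big[\,d_\Pi(u,v)<(1-\delta)\mu\,\big]\le\Pr\big[\,|d_\Pi(u,v)-\mu|>\delta\mu\,\big]<2e^{-c_\delta\mu}<2e^{-\beta p^2 n^3},
\]
with $\beta:=c_\delta(1-10^{-10})^3/64>0$ a fixed constant.

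Finally I would finish with a union bound. For fixed $\Pi$ and $v$, $\Pr[d_{P(\Pi)}(v)\ge 1]\le\sum_{u\in A_1\setminus\{v\}}\Pr[(u,v)\in P(\Pi)]<2n\,e^{-\beta p^2n^3}$. Summing over the at most $n$ choices of $v$ and the at most $4^n$ (labelled) balanced partitions $\Pi$, the probability that $d_{P(\Pi)}(v)\ge1$ for some $\Pi$ and some $v$ is at most $2n^2\,4^n\,e^{-\beta p^2n^3}$. Since $p>K\log n/n$ we have $\beta p^2n^3\ge\beta K^2 n(\log n)^2$, which swamps $\log(2n^2 4^n)=O(n)$, so this probability is $o(1)$; hence w.h.p. $d_{P(\Pi)}(v)=0$ for all balanced $\Pi$ and all $v$, and in particular $d_{P(\Pi)}(v)<\xi/p$ for every $\xi>0$. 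The only delicate point is the numerical check in the previous paragraph — that $\alpha=0.35$ places the threshold a fixed multiplicative factor below the mean $\mu\approx p^2n^3/64$, so that a single Chernoff rate $c_\delta$ applies — together with observing that the resulting bound $e^{-\Theta(p^2n^3)}$ dominates the $4^n$ union over partitions because $p^2n^3=\Omega(n(\log n)^2)\gg n$; neither step is a genuine obstacle, so the argument is essentially a routine concentration-plus-union-bound computation.
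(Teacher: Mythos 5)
Your argument is correct, but it takes a genuinely different and in fact \emph{stronger} route than the paper does. You observe that for a fixed balanced $\Pi$ and a fixed pair $u,v\in A_1$, the common crossing degree $d_\Pi(u,v)=\sum_{(x,y,z)\in A_2\times A_3\times A_4}\mathbf{1}[uxyz\in G]\mathbf{1}[vxyz\in G]$ is a sum of mutually independent Bernoulli$(p^2)$ indicators (the $2|A_2||A_3||A_4|$ relevant $4$-sets are pairwise distinct since $u\ne v$ and $u,v\notin A_2\cup A_3\cup A_4$), with mean $\mu\approx p^2n^3/64$, while the threshold $\tfrac{\alpha}{32}p^2n^3=0.7\cdot p^2n^3/64$ sits a fixed factor below $\mu$. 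Chernoff then gives $\Pr[(u,v)\in P(\Pi)]\le 2e^{-\Omega(p^2n^3)}$, and since $p>K\log n/n$ forces $p^2n^3=\Omega(n(\log n)^2)\gg n$, the union bound over at most $4^n$ partitions and $n^2$ pairs closes. This yields the stronger conclusion that w.h.p.\ $P(\Pi)=\emptyset$ for \emph{every} balanced $\Pi$, which of course implies $d_{P(\Pi)}(v)<\xi/p$ for every $\xi>0$.

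The paper instead avoids concentrating on a single pair: it fixes $v$, exposes $Q=L_\Pi(v)$, picks a candidate bad set $S$ of $\lceil\xi/p\rceil$ partners of $v$, and applies concentration to $|G[S,Q]|$ (so the exponent in the union bound is $\Theta(|S|\cdot|Q|\cdot p)=\Theta(\xi pn^3)$ rather than $\Theta(p^2n^3)$). That extra bookkeeping is what one is forced to do in the $3$-uniform analogue, where the per-pair exponent $p^2n^2=\Theta((\log n)^2)$ is far too small to beat $k^n$; the present paper appears to have inherited that template. In the $4$-uniform setting your simpler per-pair bound already beats the $4^n$ union, so your approach is both shorter and gives more. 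One consequence worth flagging, however: if $P(\Pi)$ is w.h.p.\ empty for all balanced $\Pi$, then Lemma~\ref{lem14} (which hypothesizes $P(\Pi)\neq\emptyset$) is vacuously true, and the case analysis in the proof of Theorem~\ref{th1} around $B(\Pi)$ becomes unnecessary. You may wish to double-check the exponent comparison ($p^2n^3\gg n$) once more precisely because of how much it simplifies the paper's structure, but I do not see a gap: the $Z_{xyz}$ are genuinely independent indicators, the balancedness constraint pins $|A_i|$ within a $(1\pm10^{-10})$ factor so the $0.7$-versus-$1$ comparison is robust, and the exponent $\Omega(n(\log n)^2)$ does dominate $\log(4^n n^2)=O(n)$ for every fixed $K>0$.
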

\begin{proof}
Let $\epsilon=0.1$. By Proposition \ref{prop8}, we assume that $d_{\Pi}(v)\geq (1-\epsilon)p\frac{n^3}{64}$, and therefore, $d_{\Pi}(u,v)< \frac{2\alpha}{1-\epsilon}d_{\Pi}(v)p$ for $(u,v) \in d_{P(\Pi)}(v)$.

If a vertex $v$ and a balanced cut $\Pi$ violate the statement of Proposition \ref{prop11}, then there are $S \subseteq V$ and $Q=L_{\Pi}(v)$ with $|S|:=s=\lceil\xi/p\rceil$ and $|G[S,Q]|\leq \frac{2\alpha}{1-\epsilon}|S||Q|p$. We have at most $4^n$ choices of $\Pi$, $n$ choices of $v$, $\binom{n}{s}$ choices of $S$, so the probability of such a violation is at most
$$4^n n\binom{n}{s}e^{-c\cdot\frac{\xi}{p}\cdot pn^3\cdot p} $$
for some small constant $c$, and therefore is $o(1).$
\end{proof}

The following lemma plays an important role in the proof of Lemma \ref{lem13}.

\begin{lem}\label{lem12}
Let $\beta$ and $r$ be positive integers. For any $\epsilon > 0$, there exists a constant $K$ such that if $p > K \log n/n$, $\beta\leq\epsilon n$ and
\begin{equation}\label{eq1}
\binom{n}{\beta}\binom{{n^3}}{r}\exp(-c_{\epsilon}\epsilon pnr)=o(1).
\end{equation}
Then w.h.p. the following holds: for any set of
vertices $A$ with $|A| \leq \beta$, there are at most $r$ triples
$\{u, v,w\}\in \binom{V (G)}{3}$ such that $|N(u, v, w) \cap A| >
2\epsilon pn$.
\end{lem}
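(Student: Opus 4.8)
The plan is to use a union bound over all possible ``bad'' configurations, in exactly the spirit of Propositions~\ref{prop9} and~\ref{prop11}. The key observation is that the event we wish to rule out is monotone in a useful way: if there is a set $A$ with $|A|\le\beta$ admitting more than $r$ triples of high co-degree into $A$, then there is a set $A$ with exactly $|A|=\beta$ (enlarge $A$ arbitrarily; this can only increase each $|N(u,v,w)\cap A|$) and a family $\mathcal{R}$ of exactly $r$ triples, each with $|N(u,v,w)\cap A|>2\epsilon pn$. So it suffices to bound the probability that some such pair $(A,\mathcal{R})$ exists. The number of choices of $A$ is $\binom{n}{\beta}$ and the number of choices of $\mathcal{R}$ is at most $\binom{\binom{n}{3}}{r}\le\binom{n^3}{r}$, which are precisely the two binomial factors appearing in \eqref{eq1}.

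Next I would estimate, for a fixed pair $(A,\mathcal{R})$, the probability that every one of the $r$ triples $\{u,v,w\}\in\mathcal{R}$ satisfies $|N(u,v,w)\cap A|>2\epsilon pn$. For a single fixed triple and fixed $A$, the quantity $|N(u,v,w)\cap A|$ is a sum of $|A|$ independent indicator variables (one for each $x\in A\setminus\{u,v,w\}$, indicating whether $xuvw\in G$), with expectation $p(|A|-O(1))\le p\beta\le\epsilon pn$. By the Chernoff bound of Lemma~\ref{lem3}, the probability that this sum exceeds $2\epsilon pn\ge 2\mu$ is at most $2e^{-c_\epsilon\mu}$; more conveniently, exceeding $2\epsilon pn$ forces a deviation of at least $\epsilon pn$ above any mean $\le\epsilon pn$, so the probability is at most $\exp(-c_\epsilon' \epsilon p n)$ for an absolute constant $c_\epsilon'$ (absorbing constants; writing it as $c_\epsilon \epsilon pn$ up to renaming is fine since we are free to shrink $\epsilon$). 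Here the only subtlety is that the events for different triples in $\mathcal{R}$ need not be independent — two triples can share a common pair, so the corresponding indicator families overlap. To get around this I would instead pick, inside $\mathcal{R}$, a sub-family $\mathcal{R}'$ of pairwise ``almost disjoint'' triples, or more simply observe that the $r$ events, conditioned appropriately, still multiply out to give a bound of the form $\exp(-c_\epsilon \epsilon pnr)$: the cleanest route is to note that the union bound over triples is not what we want, rather we want the joint upper tail, and one can handle the dependence by revealing the edges $\{xuvw: x\in A\}$ triple by triple and noting each conditional probability is still at most $\exp(-c_\epsilon\epsilon pn)$ regardless of previously revealed edges, because the indicator variables for a new triple $\{u,v,w\}$ involve edges $xuvw$ not yet examined whenever the triples are distinct (distinct triples give distinct $4$-sets $\{x,u,v,w\}$ for each $x$, except for the finitely many $x$ lying in another triple of $\mathcal{R}$, which only shifts the mean by $O(1)$ and is harmless). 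Hence the probability for a fixed $(A,\mathcal{R})$ is at most $\exp(-c_\epsilon\epsilon pnr)$, after adjusting constants.

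Putting the pieces together, the probability that the conclusion fails is at most
\[
\binom{n}{\beta}\binom{n^3}{r}\exp(-c_\epsilon\epsilon pnr),
\]
which is $o(1)$ precisely by hypothesis~\eqref{eq1}. The constraint $\beta\le\epsilon n$ is what makes $p\beta\le\epsilon pn$, so that $2\epsilon pn$ is genuinely a constant factor above the mean and Chernoff gives exponential decay; it also keeps $\binom{n}{\beta}$ from being too large. The requirement $p>K\log n/n$ enters only through the earlier propositions (e.g.\ to guarantee the background regularity we might invoke), and in any case guarantees $pn\to\infty$ so that the exponent $c_\epsilon\epsilon pnr$ is a growing quantity; the precise interplay of $\beta$, $r$, and $p$ needed for \eqref{eq1} to hold is left as a hypothesis of the lemma rather than derived.

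I expect the main obstacle to be exactly the dependence issue flagged above: the naive ``product of $r$ independent Chernoff bounds'' is not literally valid because distinct triples of $\mathcal{R}$ can share vertices, so one must argue carefully — either via a sequential exposure/martingale argument as sketched, or by extracting a large sub-family of triples spanning disjoint vertex sets and bounding only on that sub-family. Everything else (the monotonicity reduction, the counting of configurations, and the final union bound) is routine and mirrors the proofs of Propositions~\ref{prop9}--\ref{prop11}.
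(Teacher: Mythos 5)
Your overall strategy---monotonicity to reduce to $|A|=\beta$, a Chernoff bound per triple, a union bound over choices of $A$ and over families $\mathcal{R}$ of $r$ triples---is exactly the paper's. You are also right to distrust the naive ``product of $r$ Chernoff bounds'': the paper simply asserts that $\mathcal{B}(u,v,w)$ and $\mathcal{B}(u',v',w')$ are independent for any two distinct triples, and that assertion is false. If $T$ and $T'$ share exactly two vertices and the two ``odd'' vertices $T\setminus T'$ and $T'\setminus T$ both lie in $A$, then the single quadruple $T\cup T'$ contributes to both $|N(T)\cap A|$ and $|N(T')\cap A|$, so the two events are positively correlated. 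On this point you are more careful than the paper itself.

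The gap in your write-up is in the patch, not the diagnosis. The number of indicators for the $i$-th triple that were already exposed while processing earlier triples is not ``finitely many'' and the mean shift is not $O(1)$: there is one shared quadruple per earlier $T_j$ sharing a pair with $T_i$ whose odd vertices both lie in $A$, so the shift can be as large as $r-1$. In the regimes where Lemma~\ref{lem12} is actually invoked (e.g.\ $\beta=\epsilon_1 n$, $r=\log\log n/p$, $p\asymp\log n/n$ in Claim~\ref{lem17}), one has $r-1\gg 2\epsilon pn$, so the already-revealed contribution alone can exceed the threshold and the conditional Chernoff bound does not apply; the sequential/martingale sketch does not close as stated. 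A clean repair that neither you nor the paper records: bound $\Pr\bigl[\bigcap_{T\in\mathcal{R}}\mathcal{B}(T)\bigr]$ by $\Pr\bigl[\sum_{T\in\mathcal{R}}|N(T)\cap A|>2r\epsilon pn\bigr]$. The sum on the left is a weighted sum of \emph{independent} edge indicators, one per quadruple, with weight equal to the number of $3$-subsets of that quadruple lying in $\mathcal{R}$ whose complementary vertex is in $A$ (so weights in $\{1,\dots,4\}$) and with mean at most $p\sum_T|A\setminus T|\le rp\beta\le r\epsilon pn$. A weighted Chernoff/Bernstein bound then yields $\exp(-\Omega(r\epsilon pn))$, at the harmless cost of a worse constant in \eqref{eq1}, and the union bound proceeds exactly as you and the paper describe.
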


\begin{proof}
Fix a vertex set $A$ of size $\beta$. We shall show that there are at most $r$ triples $\{u, v,w\}\in \binom{V (G)}{3}$ such that $|N(u, v, w) \cap A|$ is large. For each triple $\{u, v,w\}$, let $\mathcal{B}(u, v,w)$ be the event that
$|N(u, v, w) \cap A| > 2\epsilon pn$. By Lemma \ref{lem3},
\begin{equation*}
Pr[\mathcal{B}(u, v,w)]<e^{-c\epsilon pn},
\end{equation*}
for $c=c_1$ in Lemma \ref{lem3}. If $\{u, v,w\}\neq \{u', v',w'\}$, then $\mathcal{B}(u, v,w)$ and $\mathcal{B}(u', v',w')$ are independent events.
Consequently, the probability that $\mathcal{B}(u, v,w)$ holds for at least $r$ triples  is at most
\begin{equation*}
\binom{{n^3}}{r}\exp(-c_{\epsilon}\epsilon pnr).
\end{equation*}
There are $\binom{n}{\beta}$ choices of $A$. Therefore, if
Eq. (\ref{eq1}) holds, then w.h.p. there are at
most $r$ triples $\{u, v,w\}\in \binom{V (G)}{3}$ such that $|N(u,
v, w) \cap A|> 2\epsilon pn$.
\end{proof}

\section{Proof of Theorem \ref{th1}}
Let $F$ be a $T$-free subhypergraph of $G$. We want to show that
$|F|\leq q(G)$.  We first state two key lemmas. The
first lemma proves $|F|\leq q(G)$ with some additional conditions on
$F$. Define the \emph{shadow graph} of a hypergraph $H$ is a simple
graph with $xy$ an edge if and only if there exists some edge of $H$
that contains both $x$ and $y$.

\begin{lem}\label{lem13}
Let $F$ be a $T$-free subhypergraph of $G$ and
$\Pi=(A_1,A_2,A_3,A_4)$ be a balanced partition maximizing
$|F[\Pi]|$. Denote  the shadow graph of $F$ by $F_S$. For $1 \leq i
\leq 4$, let $B_i = \{e \in F : |e \cap A_i|\geq 2\}$. There exist
positive constants $K$ and $\delta$ such that if $p > K \log n/n$
and the following conditions hold:
\begin{description}
           \item[(i)] $|\bigcup\limits_{i = 1}^4 {{B_i}} |  \le \delta p{n^4},$
           \item[(ii)] $B_1\neq\emptyset$,
           \item[(iii)] the subgraph of  $F_S$ induced by $B_1$ is disjoint from $P(\Pi)$,
\end{description}
then w.h.p. $|F[\Pi]|+ 4|B_1| < |G[\Pi]|$.
\end{lem}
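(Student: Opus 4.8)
My plan is to restate the conclusion as $|M|>4|B_1|$, where $M:=G[\Pi]\setminus F[\Pi]$ is the set of crossing edges of $G$ omitted by $F$. Since every crossing edge has exactly one vertex in $A_1$, this reads $\sum_{w\in A_1}m(w)>4|B_1|$, writing $m(w)$ for the number of $t\in L_\Pi(w)$ with $\{w\}\cup t\notin F$.

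The heart of the argument is a local obstruction from $T$-freeness. Fix $e\in B_1$ and two vertices $u,v\in e\cap A_1$, and let $\{a,b\}=e\setminus\{u,v\}$. For any crossing triple $t$ (one vertex in each of $A_2,A_3,A_4$) with $t\cap(\{a,b\}\setminus A_1)=\emptyset$, the three edges $\{u\}\cup t$, $\{v\}\cup t$, $e$ form a copy of $T$ (with $u,v$ in the roles of vertices $4,5$ and $a,b$ in the roles of $6,7$), so $F$ cannot contain all three. Hence the crossing triples $t$ with $\{u\}\cup t,\{v\}\cup t\in F$ all meet $\{a,b\}\setminus A_1$ and lie in $L_\Pi(u)$, so by Proposition \ref{prop5} there are at most $(1+\epsilon)pn^2=o(p^2n^3)$ of them. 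Condition (iii) forces $(u,v)\notin P(\Pi)$ (as $u,v\in V(B_1)$ and $uv\in E(F_S)$), i.e.\ $|L_\Pi(u)\cap L_\Pi(v)|=d_\Pi(u,v)\ge\tfrac{\alpha}{32}p^2n^3$; feeding these two facts into inclusion--exclusion for $L_\Pi(u),L_\Pi(v)$ and their $F$-kept parts yields, for every such pair,
$$m(u)+m(v)\ \ge\ d_\Pi(u,v)-o(p^2n^3)\ \ge\ (1-o(1))\tfrac{\alpha}{32}p^2n^3\ =:\ D.$$

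Next I would form the auxiliary graph $H$ on $A_1$ with an edge $\{u,v\}$ for every $e\in B_1$ and every $\{u,v\}\subseteq e\cap A_1$; it is non-empty by (ii), every edge of $H$ satisfies $m(u)+m(v)\ge D$, and Proposition \ref{prop5} gives $|B_1|\le(1+\epsilon)\tfrac p2 n^2\,|E(H)|$. If $|B_1|\le D/8$, one edge of $H$ already gives $|M|\ge D>4|B_1|$. Otherwise I would split on the structure of $H$. (a) If $W:=\{w:m(w)\ge D/2\}$ — always a vertex cover of $H$, by the displayed inequality — is large relative to $|B_1|$, then $|M|\ge\sum_{w\in W}m(w)\ge|W|\,D/2$ already beats $4|B_1|$; this handles "spread-out" $H$, e.g.\ $H$ with a large matching, via $|W|\ge\nu(H)$. (b) If some $v\in A_1$ lies in $B_1$-edges with at least $\epsilon_1 n$ distinct $A_1$-partners $S$, I would invoke Proposition \ref{prop9} with centre $v$, this $S$, a transversal $\mathcal E\subseteq B_1$ of edges through $v$ covering $S$, and $Q$ the crossing triples $t$ with $\{v\}\cup t\in F$: exactly as in the discussion preceding Proposition \ref{prop9}, $T$-freeness forces $G_{v,\mathcal E}[S,Q]\cap F=\emptyset$, and Proposition \ref{prop9} makes $|G_{v,\mathcal E}[S,Q]|=(1\pm\epsilon)p|S||Q|$ large (if $Q$ is not of linear-in-$d_\Pi(v)$ size, then $m(v)$ itself is already enormous). (c) In the remaining case I would double count the incidences $(e,m)$ with $e\in B_1$, $m\in M$, and $m\in\{\{u_e\}\cup t,\{v_e\}\cup t\}$ for some $t$ in $\mathcal T_e:=\big(L_\Pi(u_e)\cap L_\Pi(v_e)\big)\setminus\{t:t\cap(e\setminus\{u_e,v_e\})\ne\emptyset\}$: the second step gives $|\mathcal T_e|\ge(1-o(1))D$, each $t\in\mathcal T_e$ producing at least one element of $M$, so there are $\ge(1-o(1))D|B_1|$ incidences; conversely a fixed $m=\{w\}\cup t$ can be witnessed only by an $e$ pairing $w$ with a co-neighbour $v'$ of the triple $t$ that also occurs in some $B_1$-edge, and Lemma \ref{lem12} applied to the set $A$ of all $A_1$-vertices occurring in $B_1$-edges bounds (outside a controlled set of triples) the number of such $v'$ by $2\epsilon pn$, while Proposition \ref{prop5} bounds by $(1+\epsilon)\tfrac p2 n^2$ the $e$ through a fixed pair $\{w,v'\}$; so a typical $m$ has at most $\epsilon(1+\epsilon)p^2n^3$ witnesses, and dividing with $\epsilon$ chosen so that $\epsilon(1+\epsilon)<\alpha/128$ gives $|M|>4|B_1|$.

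I expect the main obstacle to be the quantitative bookkeeping. The gain in each regime must be forced above $4|B_1|$ — not merely a small constant multiple of $|B_1|$ — uniformly over $|B_1|\le\delta pn^4$ and beneath the trivial ceiling $|M|\le|G[\Pi]|$, which is what pins down how small $\delta,\epsilon,\epsilon_1$ must be and how large $K$ must be. In particular one must control the ``heavy triples'' excluded by Lemma \ref{lem12} (bounding their total contribution by choosing its parameter $r$ via Eq.~(\ref{eq1}), which is where the lower bounds on $|B_1|$ and on $|A|$ enter), and one must check that the case split is genuinely exhaustive, handling e.g.\ an $H$ with many moderate-degree vertices rather than one of very high degree. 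Making a vertex-cover estimate, Lemma \ref{lem12}, and Proposition \ref{prop9} mesh cleanly is exactly what makes the argument more involved than the $3$-uniform version of Balogh et al.
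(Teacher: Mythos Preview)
Your reduction to $|M|>4|B_1|$ and the local inequality $m(u)+m(v)\ge D$ for each $B_1$-pair $\{u,v\}$ are exactly right, and match the paper's opening move. The double-counting skeleton in your case~(c) is also essentially the paper's Claim~\ref{lem17}. But case~(b) is a genuine gap, not just bookkeeping. Proposition~\ref{prop9} applied to a \emph{single} high-degree vertex $v$ yields $|G_{v,\mathcal E}[S,Q]|\approx \epsilon_1\epsilon_2\,p^2n^4$ edges of $M$; this is of order $p^2n^4$, while $4|B_1|$ may be as large as $4\delta\,pn^4$. For $p=K\log n/n$ these differ by a factor of order $1/p$, so one vertex cannot close the gap. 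Nothing in your cases (a) or (c) absorbs this regime either: you could easily have $\Delta(H)\ge\epsilon_1 n$ while the vertex cover $W$ is small (e.g.\ when many edges of $H$ meet a small set $C$).

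What you are missing is the paper's first structural step: under the contradiction hypothesis $|M|\le 4\delta pn^4$, the double-count over \emph{all} pairs in $E(H)$ already forces $|E(H)|\le 640\delta n^2$, hence the set $C$ of high-degree vertices has $|C|\le\epsilon_3 n$ with $\epsilon_3$ tiny. This bound on $|C|$ is exactly what makes the $C_1$-argument work: summing Proposition~\ref{prop9} over every $w\in C_1$ gives $\Theta(|C_1|\,p^2n^4)$ copies of $\hat T$, and each non-bad $m\in M$ is now hit at most $2\epsilon_3 pn$ times (Lemma~\ref{lem12} with $A=C_1\subseteq C$), giving $|M|\gtrsim pn^3|C_1|$; comparing with $|B_1^{(1)}|\le pn^3|C_1|$ finishes. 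The paper then splits $B_1$ by how each edge meets $C,C_1,C_2$ rather than by a global feature of $H$, which is what makes every case line up. Two smaller points: in your case~(c), take $A=N_H(w)$ per vertex, not the full vertex set of $H$ (which is not small), and be prepared to use several choices of $(\beta,r)$ in Lemma~\ref{lem12} depending on $d_H(w)$, as the paper does in the proof of Claim~\ref{lem17}; a single choice does not satisfy Eq.~(\ref{eq1}) across the whole range.
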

{\em Remark.} We point out that if Condition (ii) does not hold,
i.e., $|B_1| = 0$, then clearly $|F[\Pi]|+ 4|B_1| \leq |G[\Pi]|$.
Therefore, Conditions (i) and (iii) imply that $|F[\Pi]|+ 4|B_1|
\leq |G[\Pi]|$, while Condition (ii) implies the
strict inequality.

Let $F_0$ be a maximum $T$-free subhypergraph of $G$. By Theorem
\ref{th2} and Proposition \ref{prop10},
w.h.p. Condition (i) of Lemma \ref{lem13} holds for every $\delta > 0$. Without loss of
generality, we may assume that $|B_1| \geq |B_2|, |B_3|, |B_4|$. If
$F_0$ is not 4-partite, then Condition (ii) of Lemma \ref{lem13}
holds. Moreover, if $P(\Pi)=\emptyset$, then Condition (iii) of
Lemma \ref{lem13} also holds. Therefore, if $P(\Pi)=\emptyset$ and
$F_0$ is not 4-partite, then we can apply Lemma \ref{lem13} to $F_0$ and
get that $|F_0| < q(G)$, a contradiction. Hence if $P(\Pi)=\emptyset$ for
every balanced partition $\Pi=(A_1,A_2,A_3,A_4)$, then the proof
would be completed. Thus, we need to consider the
case that $P(\Pi)\neq\emptyset$.  The following lemma  tells us that
if $P(\Pi)\neq\emptyset$, then $\Pi$ is far from being a maximum
4-partition.

\begin{lem}\label{lem14}
There exist positive constants $K$ and $\delta$ such that if $p > K \log n/n$, the 4-partition $\Pi$ is balanced, and $P(\Pi)\neq\emptyset$, then w.h.p.
\begin{equation}\label{eqa1}
q(G) > |G[\Pi]| + |P(\Pi)|\delta n^3p^2.
\end{equation}
\end{lem}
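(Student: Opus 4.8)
The plan is to show that whenever $P(\Pi)\neq\emptyset$, we can modify $\Pi$ into a new balanced-ish partition whose crossing edge count in $G$ is larger by the claimed amount, which forces $q(G)$ above $|G[\Pi]|$. Fix a pair $(u,v)\in P(\Pi)$, so $u,v\in A_1$ and $d_\Pi(u,v) < \frac{\alpha}{32}p^2n^3$ with $\alpha = 0.35$. The key idea is that moving $v$ out of $A_1$ and into one of the other classes, say $A_2$, changes $|G[\Pi]|$ in a controlled way: we lose exactly the crossing edges through $v$ that use one vertex from each of $A_1,A_2,A_3,A_4$ with $v$ playing the $A_1$-role (a set of size $d_\Pi(v) = (1\pm\epsilon)p|A_2||A_3||A_4| \approx \frac{p n^3}{64}$ by Proposition~\ref{prop8}), but we gain the edges through $v$ that now cross because $v\in A_2$: these are edges $vxyz$ with $x\in A_1\setminus\{v\}$, $y\in A_3$, $z\in A_4$. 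Crucially, among those gained edges, the ones that also pass through $u$ (i.e.\ $x=u$) number at most $d_\Pi(u,v) < \frac{\alpha}{32}p^2n^3$, which is small; and the rest form, up to lower-order terms, another set of size $\approx \frac{p n^3}{64}$ by another application of Proposition~\ref{prop8} applied to the partition with $v$ reassigned. So the gain essentially cancels the loss to first order, and the \emph{net} change is governed by a second-order correction. To extract a genuine gain of order $n^3p^2$ we need to move a whole batch of vertices simultaneously.

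So the real plan is: let $W\subseteq A_1$ be a set of vertices, one endpoint from each pair in $P(\Pi)$ (or a suitable large independent-ish subset), with $|W|$ comparable to a constant times $|P(\Pi)|/\!\binom{|A_1|}{1}$-type quantity — more precisely we want $|W|$ of order $|P(\Pi)|/n$ so that $|W|\cdot n^3p^2$ has order $|P(\Pi)|n^3p^2/n$; but actually we want the cleaner statement with $|P(\Pi)|$ itself, so instead I would argue more directly on pairs. Here is the cleaner route. For each pair $(u,v)\in P(\Pi)$, think of the ``deficiency'' of that pair: in the optimal 4-partition the common crossing degree of two vertices in the same class ought to be about $\frac{p^2n^3}{6\cdot 64}\cdot(\text{something})$, in any case a constant times $p^2n^3$ strictly larger than $\frac{\alpha}{32}p^2n^3$ when $\alpha=0.35$ is chosen below the ``typical'' value (this is why $\alpha = 0.35$: Proposition~\ref{prop6} gives common degree $(1\pm\epsilon)\frac{p^2n^3}{6}$ globally, and the crossing version in a balanced partition is a definite fraction of that, larger than $\frac{0.35}{32}p^2n^3$). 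The point is that a pair in $P(\Pi)$ is ``abnormally non-interacting'' across $\Pi$, and such pairs can be profitably separated. Summing a gain of order $\delta n^3p^2$ per pair over all $|P(\Pi)|$ pairs — after checking, via Proposition~\ref{prop11}, that no single vertex lies in too many pairs of $P(\Pi)$ (at most $\xi/p$ of them), so the pairs are ``spread out'' and the per-pair gains do not destructively interfere — yields the total surplus $|P(\Pi)|\delta n^3p^2$. Concretely I would: (1) use Proposition~\ref{prop11} to extract a sub-collection of pairs forming a bounded-degree graph on $A_1$, losing only a constant factor; (2) greedily reassign one endpoint of each such pair to whichever of $A_2,A_3,A_4$ currently receives it best, tracking the change in $|G[\Pi]|$ via Propositions~\ref{prop8} and \ref{prop6}; (3) show each reassignment, because the pair had low common crossing degree, contributes a positive $\Theta(n^3p^2)$ to $q(G)-|G[\Pi]|$ while the interactions between reassigned vertices contribute a negligible lower-order error thanks to the bounded-degree structure and the sparsity bound $|\bigcup B_i|\le\delta pn^4$ is not even needed here — only Propositions~\ref{prop6}, \ref{prop8}, \ref{prop11}; (4) conclude Eq.~(\ref{eqa1}) with $\delta$ a suitably small absolute constant determined by $\alpha=0.35$ versus the typical crossing common degree.

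The main obstacle I anticipate is step (3): controlling the interaction terms when many vertices are moved at once. Moving $v_1$ and $v_2$ (each from some pair in $P(\Pi)$) simultaneously creates cross-terms — edges containing both $v_1$ and $v_2$ — whose sign and size must be bounded so they do not cancel the main gain. The bounded-degree property from Proposition~\ref{prop11} ($d_{P(\Pi)}(v) < \xi/p$ for all $\xi$) is precisely what makes the set of ``entangled'' pairs sparse; combined with Proposition~\ref{prop5} (co-degrees of pairs are $(1\pm\epsilon)\frac{p}{2}n^2$) one bounds the total interaction by $o(|P(\Pi)|n^3p^2)$. Getting the constants to line up — that the honest per-pair gain $\bigl(c_{\text{typ}} - \tfrac{\alpha}{32}\bigr)p^2n^3$ with $c_{\text{typ}}$ the typical crossing common degree coefficient strictly exceeds the accumulated error — is the delicate bookkeeping, and is exactly the place where the choice $\alpha = 0.35$ (rather than the ``typical'' value) buys the needed slack.
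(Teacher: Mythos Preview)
Your plan has a genuine gap, and it is not the interaction terms you flagged in step (3): the per-pair gain itself is not there. When you move $v$ from $A_1$ to $A_2$, the newly crossing edges are the sets $vxyz$ with $x\in A_1\setminus\{v\}$, $y\in A_3$, $z\in A_4$; among these, the ones with $x=u$ are simply the 4-edges $uvyz$ with $y\in A_3$, $z\in A_4$, whose count is the co-degree of $\{u,v\}$ restricted to $A_3\times A_4$, roughly $p(n/4)^2$ by Proposition~\ref{prop5}. This has nothing to do with $d_\Pi(u,v)$, which counts triples $(a,b,c)\in A_2\times A_3\times A_4$ for which \emph{both} $uabc$ and $vabc$ lie in $G$. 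So the smallness of $d_\Pi(u,v)$ gives no leverage in your reassignment ledger: the gain and loss from moving $v$ are each $(1\pm\epsilon)p(n/4)^3$ by Proposition~\ref{prop8} and cancel to within the $\epsilon$-error, with no $\Theta(n^3p^2)$ residual attached to membership in $P(\Pi)$. Picking ``whichever of $A_2,A_3,A_4$ receives it best'' does not help, since all three options are governed by the same concentration.

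The paper's argument is not a vertex-moving construction at all; it is a staged-exposure comparison. After extracting a bipartite $R\subseteq P(\Pi)$ on parts $X,Y$ (your step (1) is right), one reveals $G$ in three rounds: (a) all 4-sets meeting $X$; (b) the remaining 4-sets except those of the form $\{y\}\cup abc$ with $y\in Y$ and $abc\in L(x)$ for some $xy\in R$; (c) the rest. Let $H$ be the hypergraph after (a)--(b), and let $\Gamma$ maximize $|H[\Gamma]|$ among balanced cuts with $d_\Gamma(x)=f(x)$. For the bad cut $\Sigma$, the crossing edges added in stage (c) are at most $\sum_{xy\in R}d_\Sigma(x,y)\le \alpha' p\sum_{xy\in R} f(x)$ with $\alpha'=\tfrac{2\alpha}{1-\epsilon}<1$, precisely because $R\subseteq P(\Sigma)$. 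For $\Gamma$, the stage-(c) crossing edges include $\sum_{y}|G[y,M(y)]|$ with $M(y)=\bigcup_{xy\in R}L_\Gamma(x)$; since stage (c) is revealed \emph{after} $\Gamma$ is fixed, Chernoff gives this sum $\ge (1-\epsilon)(1-\xi/\gamma)p\sum_{xy\in R} f(x)$. With $|H[\Gamma]|\ge|H[\Sigma]|$ one gets $q(G)-|G[\Sigma]|\ge\bigl[(1-\epsilon)(1-\xi/\gamma)-\alpha'\bigr]p\sum_{xy\in R} f(x)\gtrsim |R|\,p^2n^3$, and a union bound over all $(R,X,Y,f)$ finishes. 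The correct reading of ``$(u,v)\in P(\Pi)$'' is that $\Pi$ under-uses the edges in $[v,L_\Pi(u)]$ --- there are about $p\,d_\Pi(u)$ such edges in $G$ but only $d_\Pi(u,v)$ of them are $\Pi$-crossing --- and this is exploited by comparing to a different cut of a partially revealed hypergraph, not by relocating $v$.
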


\noindent{\em Remark.} If $P(\Pi)=\emptyset$, then clearly $q(G)
\geq |G[\Pi]| + |P(\Pi)|\delta n^3p^2$. Therefore, Lemma \ref{lem14}
also implies that $q(G) \geq |G[\Pi]| + |P(\Pi)|\delta n^3p^2$ for
every balanced 4-partition $\Pi$. We point out that the constant
$\delta$ in  Lemma \ref{lem13} also satisfies Lemma \ref{lem14},
which can be verified in the proof of Lemma \ref{lem14}.

Now we use Lemmas \ref{lem13} and
\ref{lem14} to prove Theorem \ref{th1}. The proofs of  those two lemmas are presented in the  next two subsections.

Let $\tilde{F}$ be a maximum $T$-free subhypergraph of $G$, so
$|\tilde{F}|\ge q(G )$. To prove Theorem \ref{th1}, it is sufficient
to show that $|\tilde{F}|\le q(G )$. Let $\Pi = (A_1, A_2, A_3,A_4)$
be a 4-partition maximizing $|\tilde{F}[\Pi]|$. From
Proposition \ref{prop10}, we get that $\Pi$ is balanced. For $1 \le
i \le 4$, let $\tilde{B}_i = \{e \in \tilde{F} , |e \cap A_i | \ge
2\}$. Without loss of generality, we may assume $|\tilde{B}_1| \ge
|\tilde{B}_2|, |\tilde{B}_3|, |\tilde{B}_4|$. Let $B(\Pi) = \{e \in
G : \exists(u, v) \in P(\Pi)$ s.t. $\{u, v\} \subset e\}$ and $F
=\tilde{F}-B(\Pi)$. Then $F $ satisfies Condition (iii) of Lemma
\ref{lem13} and $\Pi$ maximizes $|F[\Pi]|$ as well. By Proposition
\ref{prop5}, we know that w.h.p. $|B(\Pi)| \le |P(\Pi)|pn^2$.
Together with  Proposition \ref{prop11}, we have that w.h.p.
$|B(\Pi)| \le O(n^3)$.  Combining with Proposition \ref{prop10}, we
can apply Theorem \ref{th2} to $F $ (let $\delta$ in Theorem \ref{th2} be  the constant $\delta$ given in
Lemma \ref{lem13}). And  since $\Pi$ maximizes $|F[\Pi]|$, we can
derive that $F $ and $\Pi$ satisfy Condition (i) of Lemma
\ref{lem13}. For $1 \le i \le 4$, let $B_i = \{e \in F , |e \cap A_i
| \ge 2\}$. Then we have:
\begin{align*}
|\tilde{F}|  &\le |\tilde{F}[\Pi]|+4|\tilde{B}_1|\\
\null  &=|F[\Pi]|+4
|{B}_1|+4|\tilde{F}\cap B(\Pi)| \\
\null  &\le |G[\Pi]|+4|\tilde{F}\cap B(\Pi)|\\
\null  &\le |G[\Pi]|+4| B(\Pi)|\\
\null  &\le |G[\Pi]|+4|P(\Pi)|\delta p^2n^3 \\
\null  &\le q(G),
\end{align*}
where $\delta$ is  the  constant  in  Lemma \ref{lem13}. The second
inequality follows from Lemma \ref{lem13} and its remark. Note that
the  constant $\delta$ in  Lemma \ref{lem13} satisfies Lemma
\ref{lem14}, the last inequality holds because of Lemma \ref{lem14} and its remark.
Hence, $|\tilde{F}| =q(G)$. So equalities hold throughout the above
inequalities. If $B_1\neq \emptyset$, then by Lemma \ref{lem13},
$|F[\Pi]|+4 |{B}_1|+4|\tilde{F}\cap B(\Pi)| <|G[\Pi]|+4| B(\Pi)|$
and if $P(\Pi)\neq \emptyset$, then $|G[\Pi]|+4|P(\Pi)|\delta p^2n^3
< q(G)$, both are contradictions. So both $B_1$ and
$P(\Pi)$ are empty sets. It follows that $\tilde{B}_1$ is an empty
set. Since we assume that $|\tilde{B}_1| \ge
|\tilde{B}_2|,|\tilde{B}_3|,|\tilde{B}_4|$, we have that
$|\tilde{B}_1| =|\tilde{B}_2|=|\tilde{B}_3|=|\tilde{B}_4|=0$, which
implies that  $\tilde{F}$ is 4-partite.

The proof is thus completed.\qed

\subsection{Proof of Lemma \ref{lem13}}
Let $\epsilon_1=\frac{1}{4200}$, $\epsilon_2=\frac{1}{7200}$, $\delta=\frac{\epsilon_1^3\epsilon_2}{320\times110\times16}$, $\epsilon_3=\frac{16\times80\delta}{\epsilon_1}$.

Let $M$ be the set consisting of crossing edges in $G \setminus F$. To prove Lemma \ref{lem13}, it suffices to prove that
$4|B_1| < |M|$, so we assume for contradiction that $|M|\leq 4|B_1|  \le 4\delta p{n^4}$. Our aim is to derive a lower bound for $|M|$, which contradicts to our assumption.

For each edge $W=w_1w_2w_3w_4\in B_1$, with $w_1,w_2\in A_1$, from Condition (iii), we have $w_1w_2\notin P(\Pi)$, which implies there are at least $\frac{\alpha}{32} p^2n^3$ choices of $x\in A_2$, $y\in A_3$, and $z\in A_4$ such that $w_1xyz$ and $w_2xyz$ are both crossing
edges of $G$. By Proposition \ref{prop4}, the co-degree of any three vertices is w.h.p. at most $2pn$. Hence,
there are at least $\frac{\alpha}{32} p^2n^3-6pn\geq\frac{1}{80}p^2n^3$ choices of such triple $(x,y, z)$ satisfying  $w_3, w_4\notin \{x,y, z\}$,
and then each of these triples $(x,y, z)$ together with $W$ form a copy of $T = \{w_1w_2w_3w_4,w_1xyz,w_2xyz\}$
in $G$. Since $F$ contains no copy of $T$, at least one of $w_1xyz$ and $w_2xyz$ must be in $M$.

We can obtain a lower bound for $|M|$ by counting the number of $T$ in $G$ that
contain some edge $W\in B_1$, since each such $T$ must contain at least one edge in $M$.

Unfortunately, it is not easy to count the number of $T$ directly. Actually, we will count copies of $\hat{T}$ instead of $T$ in $G$, where $\hat{T}$ is a 4-graph on 5 vertices $\{w_1,w_2,x,y, z\}$ with two crossing edges $w_1xyz$, $w_2xyz$ such that  there exists $W\in B_1$ with $w_1,w_2\in W\cap A_1$ and $x,y, z\notin W$. It is easy to see that every $\hat{T}$ yields many copies of $T$ containing some edge $W\in B_1$. From the previous argument, we know that for every pair $w_1,w_2\in W\cap A_1$ taken from some edge $W\in B_1$, there are at least $\frac{1}{80}p^2n^3$ $\hat{T}$'s containing $w_1,w_2$.

Let $L$ be the subgraph of $F_S$ induced by  the vertex set $A_1$,
let $C=\{x \in A_1:d_L(x)\geq\epsilon_1n\}$ and $D=A_1\setminus C.$
Let $C_1$ be the set of $x\in C$ such that there are at least
$\epsilon_2pn^3$ crossing edges in $F$ containing $x$. Set
$C_2=C\setminus C_1$.

We first prove four claims, which will be needed in the proof of Lemma \ref{lem13}.

\begin{claim}\label{lem15}
$|C|\leq \epsilon_3n$.
\end{claim}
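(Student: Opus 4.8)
The plan is to bound $|C|$ by controlling the total number of crossing edges of $F$ incident to vertices of $C$, and then play this off against the structural bounds already established. The key observation is the following chain of inequalities. First, for each $x \in C_1$ there are, by definition, at least $\epsilon_2 p n^3$ crossing edges of $F$ containing $x$; summing over $x \in C_1$ and dividing by $4$ (each edge has at most $4$ vertices, though in fact a crossing edge has exactly one vertex in $A_1$, so each such edge is counted at most once when $x$ ranges over $A_1$) gives a lower bound of roughly $|C_1|\,\epsilon_2 p n^3$ for the number of crossing edges of $F$. On the other hand, by Proposition~\ref{prop10} (and Proposition~\ref{prop8} applied with the balanced partition) we have $|F| \le |G[\Pi]| + 4|B_1| \le (1+\epsilon)\,p\,|A_1||A_2||A_3||A_4| + 4\delta p n^4$, which since $\Pi$ is balanced is at most $(\tfrac{3}{32}+o(1))p n^4 + 4\delta p n^4$, a constant times $p n^4$. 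Comparing the two bounds yields $|C_1| \le c_1 n$ for a small constant $c_1$ determined by $\epsilon_2$ and $\delta$.

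Next I would bound $|C_2|$. By definition every $x \in C_2 \subseteq C$ has $d_L(x) \ge \epsilon_1 n$, i.e.\ $x$ is joined in the shadow graph $F_S$ to at least $\epsilon_1 n$ other vertices of $A_1$. Every such shadow edge $xy$ with $x,y\in A_1$ comes from some edge of $F$ containing both $x$ and $y$, hence from some edge in $\bigcup_{i=1}^4 B_i$ (an edge with two vertices in $A_1$ lies in $B_1$). So the number of shadow edges inside $A_1$ is at most $6|\bigcup_i B_i| \le 6\delta p n^4$ (each hyperedge contributes at most $\binom{4}{2}=6$ shadow pairs). Counting these shadow edges from the $C_2$ side gives $\tfrac{1}{2}|C_2|\,\epsilon_1 n \le$ (number of shadow edges inside $A_1$) $\le 6\delta p n^4$. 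But wait --- this would only give $|C_2| \le 12\delta p n^3 / \epsilon_1$, which is far larger than $n$. The correct counting must instead use that an edge $W \in B_1$ with exactly $2$ vertices in $A_1$ contributes exactly one shadow pair inside $A_1$, while one with $3$ or $4$ vertices in $A_1$ contributes more; in all cases the number of shadow pairs inside $A_1$ contributed by the edges of $F$ is at most $|\bigcup_i B_i|$ up to the constant $6$, and since each vertex of $C_2$ needs $\epsilon_1 n$ such pairs we get $|C_2|\epsilon_1 n \le 12\delta p n^4$, hence $|C_2| \le \frac{12\delta p n^3}{\epsilon_1}$. Since $p \le 1$ this is $\le \frac{12\delta n^3}{\epsilon_1}$, still not $O(n)$. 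I therefore need a sharper input: the point is that $\delta$ is chosen so small (of order $\epsilon_1^3\epsilon_2$) that even with the factor $pn^3/\epsilon_1$ one stays below $\epsilon_3 n$ only when $pn^3/\epsilon_1 \cdot \delta \le \epsilon_3 n$, i.e.\ $p \le \epsilon_3 \epsilon_1 / (12\delta n^2)$ --- which is false. So the genuine argument must count shadow \emph{pairs} differently, most likely bounding the number of pairs $(u,v)\in\binom{A_1}{2}$ that lie in some edge of $F$ by the crude bound $|F| \le (\tfrac{3}{32}+\epsilon)p n^4$ times nothing --- this still fails dimensionally.

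Reconsidering: the resolution is that we do \emph{not} try to prove $|C|\le\epsilon_3 n$ by pure edge-counting, but rather we must first discard edges of $B_1$ cheaply (this is what the term $4|B_1|$ in $|F[\Pi]|+4|B_1|<|G[\Pi]|$ buys us) and combine with the structure theorem. Concretely, $\epsilon_3 = 16\cdot 80\,\delta/\epsilon_1$ is exactly of the form (constant)$\cdot\delta/\epsilon_1$, so the intended bound is: number of shadow edges inside $A_1$ is $\le$ (something)$\cdot\delta n^4 / $ nothing --- no. Let me state the plan at the right level of generality and leave the constant bookkeeping to the proof: I would show $|C| \le |C_1| + |C_2|$ where $|C_1|$ is controlled by the total-crossing-edge-count of $F$ against $|F|\le(\tfrac{3}{32}+O(\epsilon))pn^4$ (giving $|C_1|\lesssim \delta n/\epsilon_2$), and $|C_2|$ is controlled by $\sum_{x\in C_2} d_L(x) \le 2\cdot(\text{shadow edges in }A_1) \le 2|\bigcup_i B_i| \le 2\delta p n^4$ --- and here the key point I missed is that a crossing edge is \emph{never} in any $B_i$, so the shadow edges inside $A_1$ come only from non-crossing edges of $F$, of which there are at most $|\bigcup_i B_i| \le \delta p n^4$; but each such edge has $\le 4$ vertices in $A_1$, contributing $\le 6$ intra-$A_1$ shadow pairs, so intra-$A_1$ shadow pairs $\le 6\delta pn^4$, hence $\epsilon_1 n |C_2| \le \sum_{x\in C_2}d_L(x) \le 12\delta pn^4$. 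With $p = K\log n/n$ this gives $|C_2| \le 12\delta K n^2\log n/\epsilon_1$, which exceeds $\epsilon_3 n$. So this route is hopeless and the actual Claim must be using something I am not seeing from the excerpt alone --- most plausibly that ``$d_L(x)\ge\epsilon_1 n$'' is meant to be ``$\ge \epsilon_1 n^3 p$'' or that $L$ is not the shadow inside $A_1$ but a weighted count. \textbf{Main obstacle:} pinning down the correct counting so that $|C|\le\epsilon_3 n$ actually follows; given the definitions as literally stated, the bound on $|C_2|$ via intra-$A_1$ shadow edges and on $|C_1|$ via the crossing-edge budget are the two ingredients, and the hard part is verifying that the specific choices $\epsilon_1=1/4200$, $\epsilon_2=1/7200$, $\delta=\epsilon_1^3\epsilon_2/(320\cdot110\cdot16)$, $\epsilon_3=16\cdot80\,\delta/\epsilon_1$ make the arithmetic close --- which strongly suggests the intended proof bounds $|C_1|$ by $\frac{|F|}{\epsilon_2 pn^3} \le \frac{(3/32+\epsilon)pn^4 + 4\delta pn^4}{\epsilon_2 p n^3} = O(n)$ and $|C_2|$ by $\frac{2|\bigcup B_i|\cdot \text{(max }|e\cap A_1|)}{\epsilon_1 n} \le \frac{8\delta pn^4}{\epsilon_1 n}$, reconciled only if the $B_i$-bound $\delta pn^4$ is replaced by $\delta n^3$ somewhere, i.e.\ Condition (i) should perhaps read $\delta n^3$ rather than $\delta pn^4$ for this to work; I would flag this and proceed with the counting that makes both terms $O(n)$ with the stated constants.
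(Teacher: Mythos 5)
Your proposal does not find the paper's proof, and you correctly sense that your routes do not close. The central idea you are missing is that Claim~\ref{lem15} is proved \emph{inside} the contradiction hypothesis of Lemma~\ref{lem13}: at the start of that proof one assumes $|M|\le 4|B_1|\le 4\delta p n^4$, where $M=G[\Pi]\setminus F[\Pi]$ is the set of crossing edges of $G$ missing from $F$. The paper then bounds $|C|$ by first bounding $|E(L)|$, and the bound on $|E(L)|$ uses $M$, not $\bigcup_i B_i$ or $|F|$. Concretely: every vertex of $C$ has $L$-degree at least $\epsilon_1 n$, so $\epsilon_1 n|C|\le 2|E(L)|$. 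To bound $|E(L)|$, one observes that each edge $wu\in E(L)$ lies outside $P(\Pi)$ by Condition~(iii), hence $d_\Pi(w,u)\ge\frac{\alpha}{32}p^2n^3$, producing (after subtracting a $6pn$-size correction via Proposition~\ref{prop4}) at least $\tfrac{1}{80}p^2n^3$ copies of $\hat T$, each forcing at least one crossing edge of $G$ into $M$. Since any crossing edge of $G$ can serve this role for at most $2pn$ pairs $wu$ (again Proposition~\ref{prop4}), $|M|\cdot 2pn\ge |E(L)|\cdot\tfrac{1}{80}p^2n^3$, and with $|M|\le 4\delta pn^4$ this gives $|E(L)|\le 640\delta n^2$ and hence $|C|\le \epsilon_3 n$.

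Your two alternative counts fail for the reasons you yourself diagnose. The $C_1$-count against $|F|\le(\tfrac{3}{32}+O(\epsilon))pn^4$ only yields $|C_1|\lesssim n/\epsilon_2$, which is enormously larger than $\epsilon_3 n$ (the constants are chosen so that $\epsilon_3\ll \epsilon_1,\epsilon_2$). The $C_2$-count via intra-$A_1$ shadow pairs against Condition~(i) gives a bound of order $\delta p n^3/\epsilon_1$, which is not $O(n)$. Neither issue is a typo in the paper: Condition~(i) really is $\delta pn^4$, and the degree threshold really is $\epsilon_1 n$. The fix is not sharper bookkeeping along your lines but the different counting scheme above, whose crucial extra leverage comes from (a) the assumption $|M|\le 4\delta pn^4$ and (b) the $p^2n^3/(pn)\sim pn^2$ ``gain'' per $L$-edge coming from the $\hat T$-count weighted against the co-degree bound. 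Also note that the split $C=C_1\cup C_2$ is not used in proving Claim~\ref{lem15} at all; it is introduced only for the later Claims~\ref{lem16} and~\ref{lem18}.
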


\begin{proof}
 Since every vertex  in $C$ has degree at least $\epsilon_1n$ in $L$, we have $\epsilon_1n|C| \leq 2|E(L)| $.

On the other hand, we show  that $|E(L)|\leq 640\delta n^2$.
For each edge $wu\in E(L)$, since $wu\notin P(\Pi)$,  there are at
least $\frac{1}{80} p^2n^3$ choices of $x\in A_2$, $y\in A_3$, $z\in
A_4$, such that $\{w,u,x,y,z\}$ is the vertex set  of a $\hat{T}$ in
$G$. So at least one of $wxyz$ and $uxyz$ must be in $M$. By
Proposition \ref{prop4}, $|M|\cdot2np\geq |E(L)|\frac{1}{80}
p^2n^3$. Since we assume $|M|\leq 4|B_1|  \le 4\delta p{n^4}$, it
follows that $|E(L)|\leq 640\delta n^2$. Thus, $|C| \leq
\frac{16\times80 \delta n}{\epsilon_1}=\epsilon_3n$.
\end{proof}
The next three claims provide lower bounds for $|M|$ with different parameters.
\begin{claim}\label{lem16}
$|M|\geq \frac{{\epsilon_1}\epsilon_2}{16\epsilon_3}pn^3|C_1|$.
\end{claim}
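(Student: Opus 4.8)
The plan is to count copies of $\hat{T}$ whose special pair $w_1,w_2$ both lie in $C_1$ and come from a common edge $W \in B_1$, and to bound this count both from above (using $|M|$) and from below (using the definition of $C_1$). First I would observe that, by the definition of $C_1$, every vertex $x \in C_1$ lies in at least $\epsilon_2 p n^3$ crossing edges of $F$; choosing one such crossing edge arbitrarily and, by Proposition \ref{prop4}, avoiding the $O(pn)$ triples through any prescribed pair, we may for each $x \in C_1$ select a large family $\mathcal{E}_x$ of crossing edges of $G$ through $x$ that still hits essentially every vertex of $C_1 \setminus \{x\}$ via the shadow. The point of passing through $C_1$ rather than all of $A_1$ is that membership in $C_1$ guarantees both high shadow-degree in $L$ (so there are edges of $B_1$ meeting $C_1$ in two vertices) and high crossing-$F$-degree (so the gadget $\mathcal{E}$ of Proposition \ref{prop9} is available).

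Next I would set up the double count. Fix $x \in C_1$. Since $x \in C \subseteq A_1$ has $d_L(x) \ge \epsilon_1 n$, there are at least $\epsilon_1 n$ vertices $u \in A_1$ with $xu \in E(L)$, i.e. $xu$ is in the shadow of an edge $W \in B_1$. For each such pair $x,u$, Condition (iii) gives $xu \notin P(\Pi)$, so there are at least $\frac{\alpha}{32}p^2 n^3 - 6pn \ge \frac{1}{80}p^2 n^3$ triples $(y_1,y_2,y_3)$ crossing the other three classes with $xy_1y_2y_3$ and $uy_1y_2y_3$ both crossing edges of $G$ and the triple avoiding the remaining vertices of $W$; each such configuration is a $\hat{T}$, and therefore forces at least one of $xy_1y_2y_3$, $uy_1y_2y_3$ into $M$. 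I would express this via the set $G_{x,\mathcal{E}_x}[S,Q]$ of the pre-Proposition-\ref{prop9} construction, with $S \subseteq C_1$ of size $\epsilon_1 n$ (the large shadow-neighborhood of $x$ inside $C_1$, or a suitable subset of $A_1$ adjusted to $C_1$), $Q = L_\Pi(x)$ of size $\epsilon_2 p n^3$, and $\mathcal{E} = \mathcal{E}_x$ the chosen crossing edges through $x$. By the definition of $G_{x,\mathcal{E}}[S,Q]$, each edge $uy_1y_2y_3$ in it completes to a copy of $T$ inside $G$ using an edge of $\mathcal{E}_x$, hence must lie in $M \cup (\text{edges of } F\text{ that close } T)$; since $F$ is $T$-free, it lies in $M$. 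Proposition \ref{prop9} then gives $|G_{x,\mathcal{E}_x}[S,Q]| = (1\pm o(1))p|S||Q| \ge \frac{1}{2}\epsilon_1\epsilon_2 p^2 n^4$ many such edges, all in $M$, for each fixed $x \in C_1$.

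Finally I would aggregate over $x \in C_1$ and control overcounting. An edge $e = uy_1y_2y_3 \in M$ can be produced by this procedure only for those $x$ with $x \in N(y_1,y_2,y_3)$, i.e. $x$ ranges over a co-neighborhood; by Proposition \ref{prop4} this co-degree is at most $2pn$, so each edge of $M$ is counted at most $2pn$ times (a further constant factor absorbs the choice of which of $\mathcal{E}_x$'s at most $\epsilon_3 n \cdot 6pn$-sized families it came from, and the choice of $u$ versus $x$). Hence
\[
|M| \cdot (2pn)\cdot O(1) \;\ge\; |C_1| \cdot \tfrac12 \epsilon_1\epsilon_2 p^2 n^4,
\]
which after simplification and feeding in $|C| \le \epsilon_3 n$ from Claim \ref{lem15} (used to keep the number of gadgets, and thus the union-bound cost in Proposition \ref{prop9}, under control) yields $|M| \ge \frac{\epsilon_1\epsilon_2}{16\epsilon_3} p n^3 |C_1|$ with room to spare. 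The main obstacle is bookkeeping the overcounting precisely: one must make sure that the triples $(y_1,y_2,y_3)$ are genuinely disjoint from the vertices of the witnessing edge $W \in B_1$ (handled by the $-6pn$ slack from Proposition \ref{prop4}) and that the map from $(x,u,\text{triple})$ to an edge of $M$ has multiplicity bounded by a quantity of order $pn$, so that the $p^2$ on the right survives as a $p$ after dividing. Choosing $S$ inside $C_1$ rather than $A_1$, and invoking Proposition \ref{prop9} with the fixed proportions $\epsilon_1,\epsilon_2$, is exactly what makes the counting uniform over all balanced $\Pi$ and all admissible $\mathcal{E}$.
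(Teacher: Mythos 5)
Your proposal follows the paper's overall strategy (Proposition~\ref{prop9} applied with $v=w\in C_1$, $S\subseteq N_L(w)$, $\mathcal{E}$ the edges of $B_1$ through $w$, $Q\subseteq L_\Pi(w)\cap F$, then an overcount argument), and the lower bound of roughly $\tfrac14\epsilon_1\epsilon_2 p^2 n^4|C_1|$ on the number of relevant $\hat T$'s is the right first step. But there is a genuine gap in the final step, and the arithmetic does not close.

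The problem is your overcounting bound. You argue that an edge $uy_1y_2y_3\in M$ can arise from at most $|N(y_1,y_2,y_3)|\le 2pn$ choices of $w$, and conclude $|M|\cdot 2pn\cdot O(1)\ge \tfrac12\epsilon_1\epsilon_2 p^2 n^4|C_1|$. That gives $|M|\gtrsim \epsilon_1\epsilon_2\, pn^3|C_1|$ up to a constant. However, the claim asserts $|M|\ge\frac{\epsilon_1\epsilon_2}{16\epsilon_3}pn^3|C_1|$, and $\epsilon_3=\frac{\epsilon_1^2\epsilon_2}{440}$ is tiny, so $\frac{1}{16\epsilon_3}$ is a \emph{huge} constant, far larger than $\tfrac14$. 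There is no ``room to spare'': your bound is weaker than the claim by a factor of order $1/\epsilon_3\approx 10^{10}$, and the weaker bound would not support Case~1 of the proof of Lemma~\ref{lem13}, which needs $|M|\ge 13\,pn^3|C_1|$.

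The missing idea is that the overcount must be measured against $|N(y_1,y_2,y_3)\cap C_1|$, not against the full co-degree. Since $|C_1|\le\epsilon_3 n$ by Claim~\ref{lem15}, a typical triple $(y_1,y_2,y_3)$ has only about $\epsilon_3 pn$ co-neighbours inside $C_1$, so a typical edge of $M$ is counted only $O(\epsilon_3 pn)$ times, which is where the factor $1/\epsilon_3$ in the claim comes from. The triples for which $|N(y_1,y_2,y_3)\cap C_1|$ exceeds $2\epsilon_3 pn$ are the ``bad'' ones; Lemma~\ref{lem12} (applied with $A=C_1$, $\beta=\epsilon_3 n$, $r=\frac{\log\log n}{p}$) shows there are at most $\frac{\log\log n}{p}$ of them, and by Proposition~\ref{prop4} each contributes at most $\binom{2pn}{2}$ copies of $\hat T$, which is negligible compared with $\tfrac14\epsilon_1\epsilon_2 p^2 n^4|C_1|$. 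Discarding those and dividing the remaining count by the refined multiplicity $2\epsilon_3 pn$ is what produces $\frac{\epsilon_1\epsilon_2}{16\epsilon_3}$. A minor secondary point: you at one moment suggest taking $S\subseteq C_1$, but $|C_1|\le\epsilon_3 n<\epsilon_1 n$, so $S$ cannot be chosen inside $C_1$; the paper takes $S\subseteq N_L(w)\subseteq A_1$, which is what makes $|S|=\epsilon_1 n$ available.
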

\begin{proof}
This inequality is trivial if $|C_1|=0$, so we assume $|C_1| \geq 1$.
For each $w \in C_1$, let $Q_w =
\{(x, y,z) \in A_2 \times A_3 \times A_4 : wxyz \in F\}$. We have the number of neighbors of $w$ in $L$, denoted by $|N_L(w)|$, satisfying $|N_L(w)|\geq \epsilon_1 n$ and $|Q_w| \geq\epsilon_2pn^3$. We will count the number of copies of $\hat{T}$ with the vertex set $\{w,u,x,y,z\}$ in $G$ such that $w \in C_1$, $u\in |N_L(w)|$, $wxyz \in F$ and $uxyz \in G$. By Proposition  \ref{prop9} with $v=w$, $S\subseteq N_L(w)$ and $|S|=\epsilon_1 n$, $\mathcal{E}=\{W\in B_1:w\in W\}$
and $Q\subseteq Q_w$ with $|Q| =\epsilon_2pn^3$, there are at least $\frac{1}{2}|S||Q|p$ $\hat{T}$'s for each $w\in C_1$. Thus, the total number of such copies of $\hat{T}$ is at least
\begin{equation}\label{eq2}
\frac{1}{2}\sum\limits_{w \in {C_1}} {\frac{1}{2}|S||Q|p}= \frac{1}{4}|C_1|\epsilon_1n\cdot\epsilon_2pn^3\cdot p=\frac{1}{4}\epsilon_1\epsilon_2p^2n^4|C_1|.
\end{equation}

We call an edge $abcd \in M$  {\it bad} if $a\in
A_1$, $b\in A_2$, $c\in A_3$, $d\in A_4$ and there are at least
$2\epsilon_3pn$ vertices $x\in C_1$ such that $xbcd\in G$. Since
$|C_1|\leq|C|\leq \epsilon_3n$ (Claim \ref{lem15}), from Lemma
\ref{lem12} with $\epsilon=\epsilon_3$, $\beta=\epsilon n$,
$r=\frac{\log\log n}{p}$, $A=C_1$,  there are at most
$\frac{\log\log n}{p}$ triples $(b,c,d) \in A_2\times A_3\times A_4$
that are in some bad edge. By Proposition \ref{prop4},  each such
$(b,c,d)$ is in at most $\binom{2pn}{2}$ $\hat{T}$'s. Thus, the
number of copies of $\hat{T}$ estimated in (\ref{eq2}) that contain
a non-bad edge from $M$ is at least
\begin{equation*}
\frac{1}{4}\epsilon_1\epsilon_2p^2n^4|C_1|-\binom{2pn}{2}\frac{\log\log n}{p}.
\end{equation*}
Since $\binom{2pn}{2}\frac{\log\log n}{p}\leq2p^2n^2\frac{\log\log
n}{p}=2pn^2\log\log n\leq
\frac{1}{8}\epsilon_1\epsilon_2p^2n^4|C_1|$,  there are at least
$$\frac{1}{4}\epsilon_1\epsilon_2p^2n^4|C_1|-
\frac{1}{8}\epsilon_1\epsilon_2p^2n^4|C_1|=
\frac{1}{8}\epsilon_1\epsilon_2p^2n^4|C_1|$$ copies of $\hat{T}$
estimated in (\ref{eq2}) that contain a non-bad edge from $M$. By
the definition of bad edges, every such non-bad edge in $M$ is in at
most $2\epsilon_3pn$ such copies of $\hat{T}$, so we have
$$|M|\geq\frac{1}{8}\frac{\epsilon_1\epsilon_2p^2n^4|C_1|}{2\epsilon_3pn}
=\frac{\epsilon_1\epsilon_2}{16\epsilon_3}pn^3|C_1|.$$
\end{proof}

The following claim gives us another lower bound for $|M|$, its proof is similar to Claim \ref{lem16}, but we need more complicated analysis.

\begin{claim}\label{lem17}
Let $L'$ be a subgraph of $L$ such that $\Delta(L')\leq \epsilon_1n$, then $$|M|\geq \frac{p{n}^2}{320\epsilon_1}|E(L')|.$$
\end{claim}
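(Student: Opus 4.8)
The plan is to mimic the counting argument of Claim~\ref{lem16}, but now the relevant ``link'' information is carried by the graph $L'$ rather than by the sets $C_1$ and $Q_w$. First I would fix, for each edge $wu\in E(L')$, the fact that $wu\notin P(\Pi)$ (Condition~(iii) of Lemma~\ref{lem13}), so that there are at least $\frac{\alpha}{32}p^2n^3$ triples $(x,y,z)\in A_2\times A_3\times A_4$ with $w_ixyz$ crossing for $i=1,2$; subtracting the $O(pn)$ triples that meet $\{w_3,w_4\}$ via Proposition~\ref{prop4}, at least $\frac1{80}p^2n^3$ of these triples give a $\hat T$ on $\{w,u,x,y,z\}$, and hence at least one of $wxyz$, $uxyz$ lies in $M$. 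Summing over all $|E(L')|$ edges of $L'$, the total number of (edge of $L'$, completing triple) incidences producing a $\hat T$ is at least $\frac1{80}p^2n^3|E(L')|$, and each such incidence charges an edge of $M$.

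The second step is to bound how many times a single edge of $M$ can be charged. An edge $abcd\in M$ with $a\in A_1$ can only be charged through an edge $wu\in E(L')$ with $a\in\{w,u\}$, i.e. $a=w$ (or $a=u$) and the partner vertex is a neighbour of $a$ in $L'$; since $\Delta(L')\le\epsilon_1 n$, there are at most $\epsilon_1 n$ such partners. For each fixed partner, the triple $(b,c,d)$ is already determined by the edge $abcd$, so the edge $abcd$ participates in at most $\epsilon_1 n$ charged incidences. Therefore
\[
|M|\cdot \epsilon_1 n \;\ge\; \frac1{80}p^2n^3\,|E(L')|,
\]
which is slightly stronger than the stated bound $|M|\ge \frac{pn^2}{320\epsilon_1}|E(L')|$; one then just weakens the constant (the extra factor of $p$ is absorbed because $p<1$, and $1/80\ge 4/320$).

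The point that needs the most care is the bound on the multiplicity of an edge of $M$: one must be careful about which of $w$, $u$ plays the role of the $A_1$-vertex $a$ in the edge $abcd\in M$, and to note that an edge of $M$ may be the ``$wxyz$'' edge for at most $\Delta(L')$ choices of $u$ and likewise the ``$uxyz$'' edge for at most $\Delta(L')$ choices of $w$, so in total at most $2\Delta(L')\le 2\epsilon_1 n$; this factor of $2$ is again harmless for the final constant. Unlike Claim~\ref{lem16}, no ``bad edge'' clean-up via Lemma~\ref{lem12} is needed here, because the multiplicity bound comes for free from the maximum-degree hypothesis $\Delta(L')\le\epsilon_1 n$ rather than from an averaging argument; this is why the proof is shorter than that of Claim~\ref{lem16} despite the remark in the text that ``more complicated analysis'' is required — the complication is really in matching constants and in verifying that every $\hat T$ produced genuinely forces an edge into $M$ (which uses Condition~(iii) and the $T$-freeness of $F$ exactly as before).
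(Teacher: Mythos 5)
Your first step — summing $\frac{1}{80}p^2 n^3$ completing triples over all edges of $L'$ to get at least $\frac{1}{80}p^2n^3|E(L')|$ forced copies of $\hat T$ — matches the paper. The gap is entirely in your multiplicity bound, and it is fatal. You claim an edge $abcd\in M$ with $a\in A_1$ is charged at most $2\Delta(L')\le 2\epsilon_1 n$ times, reasoning that the partner $u$ must be a neighbour of $a$ in $L'$ and the completing triple is forced. But you have dropped the other defining constraint of $\hat T$: the partner must also satisfy $ubcd\in G$. The relevant multiplicity is the number of $u\in N_{L'}(a)$ with $ubcd\in G$, which is typically on the order of $\epsilon_1 p n$, not $\epsilon_1 n$ — a factor of $p$ smaller. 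With your crude bound you obtain $|M|\ge \frac{p^2 n^2}{160\epsilon_1}|E(L')|$, and your claim that this is ``slightly stronger'' than $\frac{pn^2}{320\epsilon_1}|E(L')|$ is exactly backwards: the ratio of your bound to the target is $2p$, which is $\ll 1$ in the regime $p=\Theta(\log n/n)$ where the theorem is interesting. The extra factor of $p$ is not ``absorbed because $p<1$''; it is precisely what is missing.

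Consequently, your assertion that ``no bad-edge clean-up via Lemma~\ref{lem12} is needed here'' is wrong. The paper's proof defines an edge $xabc\in M$ to be bad when more than $2\epsilon_1 pn$ vertices $y\in N_{L'}(x)$ satisfy $yabc\in G$, bounds the number of bad edges via Lemma~\ref{lem12} in three cases depending on $d_{L'}(x)$, and then charges each non-bad edge at most $2\epsilon_1 p n$ times. That concentration argument is what produces the extra factor of $p$ in the denominator, and it is essential: without it the constant in Case~3 of the proof of Lemma~\ref{lem13} (which needs $\frac{1}{320\epsilon_1}>13$) cannot be recovered for small $p$. Your remark about the factor of $2$ from the two roles of $w$ and $u$ is correct but immaterial; the missing factor of $p$ is the real issue, and it requires exactly the machinery you claimed to avoid.
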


\begin{proof}
Since for each $wu\in E(L')$, $wu\notin P(\Pi)$,  there are at least $\frac{1}{80} p^2n^3$ choices of $x\in A_2$, $y\in A_3$,  $z\in A_4$, such that $\{w,u,x,y,z\}$ is the vertex set  of a $\hat{T}$ in $G$. So the total number of such $\hat{T}$ is at least $\frac{|E(L')|}{80}p^2n^3$. And for each of these copies of $\hat{T}$, there must be  at least one of $wxyz$, $uxyz$  in $M$.

For an edge $xabc\in M$ with $x\in V(L')$, we will count copies of $\hat{T}$ containing edge $xabc$  in $G$. Call an edge $xabc$  bad if there exist at least $2\epsilon_1pn$ $y\in N_{L'}(x)$ with $yabc\in G$. Combining with Proposition \ref{prop5}, there exist at most $\min\{2pn, d_{L'}(x)\}$ vertices $y\in N_{L'}(x)$ with $yabc\in G$. Denote by $r_x$ the number of $(a,b,c)$ such that $xabc$ is bad. Therefore, the number of
copies of $\hat{T}$ that contain a non-bad edge from $M$ is at least
\begin{equation}\label{eq3}
\frac{1}{2}\sum\limits_{x \in V(L')} {d_{L'}(x)\frac{{{p^2}{n^3}}}{{80}}}-\sum\limits_{x \in V(L')} {r_x\min\{2pn, d_{L'}(x)\}}.
\end{equation}
We will prove $\frac{1}{4} {d_{L'}(x)\frac{{{p^2}{n^3}}}{{80}}}\geq
{r_x\min\{2pn, d_{L'}(x)\}}$ for every vertex $x \in V(L')$.
Depending on the value of $d_{L'}(x)$, we have three cases.

\noindent{\bf Case 1.} $d_{L'}(x)> 2pn$. $\frac{\log n}{p^2n^2}\leq
d_{L'}(x)\leq\epsilon_1n$, We apply Lemma \ref{lem12} with
$\beta=\epsilon_1n$ and $r = \frac{\log \log n}{p}$ to obtain that
$r_x \leq \frac{\log \log n}{p}$.

\noindent{\bf Case 2.} $d_{L'}(x)> 2pn$ and $\frac{\log
n}{p^{k+1}n^{k+1}}\leq d_{L'}(x)\leq\frac{\log n}{p^{k}n^{k}}$  for
some integer $k \in [2,\frac{\log n}{\log \log n}]$. We apply Lemma
\ref{lem12} with $\beta=\frac{\log n}{p^{k}n^{k}}$ and $r =
\beta/100$ to obtain that $r_x \leq\frac{\log n}{100p^{k}n^{k}}\leq
\frac{pnd_x}{100}$.

\noindent{\bf Case 3.} $d_{L'}(x)\leq 2pn$. We apply Lemma
\ref{lem12} with $\beta=2pn $ and $r =\frac{p^2n^2}{100}$ to obtain
that $r_x \leq \frac{p^2n^2}{100}$.

One can easily check that, for each of the above
three cases,
$$\frac{1}{4} {d_{L'}(x)\frac{{{p^2}{n^3}}}{{80}}}\geq {r_x\min\{2pn, d_{L'}(x)\}}.$$
Therefore, the number of copies of $\hat{T}$ estimated in (\ref
{eq3}) is at least $$\frac{1}{4}\sum\limits_{x \in V(L')}  {d_{L'}(x)\frac{{{p^2}{n^3}}}{{80}}}
=\frac{1}{2}\frac{|E(L')|}{80}p^2n^3.$$
Bearing in mind that a non-bad edge is in at most $2\epsilon_1pn$ copies of
$\hat{T}$ estimated in (\ref{eq3}), thus, we have
$$|M|\geq \frac{1}{2}\frac{|E(L')|}{80}p^2n^3\cdot\frac{1}{2\epsilon_1pn}
=\frac{|E(L')|}{320\epsilon_1}pn^2.$$
\end{proof}

\begin{claim}\label{lem18}
$|M|\geq \frac{p{n}^3}{130}|C_2|.$
\end{claim}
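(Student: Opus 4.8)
The plan is to bound, for each $w\in C_2$, the number of edges of $M$ incident to $w$ from below by a quantity of order $pn^3$, and then to sum over all $w\in C_2$, exploiting the fact that two distinct vertices of $C_2$ can never lie in a common edge of $M$. Note that no use is made of the shadow graph $L$ or of the $\hat T$-counting used for Claims \ref{lem16} and \ref{lem17}; only the crude crossing-degree estimate of Proposition \ref{prop8} is needed.

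Concretely, I would first fix a small constant $\epsilon$ (say $\epsilon=10^{-2}$) and apply Proposition \ref{prop8}: since $\Pi$ is balanced we have $|A_2|,|A_3|,|A_4|>n/80$, so w.h.p. every vertex $v\in A_1$ satisfies $d_\Pi(v)=(1\pm\epsilon)p|A_2||A_3||A_4|>pn^3/65$. Each element of $L_\Pi(v)$ is a crossing edge of $G$ through $v$, so each $w\in C_2\subseteq A_1$ lies in more than $pn^3/65$ crossing edges of $G$. Next I would invoke the definition of $C_1$: since $w\in C_2=C\setminus C_1$, fewer than $\epsilon_2 pn^3$ crossing edges of $F$ contain $w$. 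Subtracting, $w$ lies in more than $(1/65-\epsilon_2)pn^3>pn^3/130$ crossing edges of $G\setminus F$, and by definition every such edge belongs to $M$.

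Finally, since every edge of $M$ is crossing it meets $A_1$ in exactly one vertex; because $C_2\subseteq A_1$, the families $M_w:=\{e\in M:w\in e\}$ for $w\in C_2$ are pairwise disjoint, whence $|M|\ge\sum_{w\in C_2}|M_w|\ge (pn^3/130)|C_2|$, as claimed. The only probabilistic ingredient is Proposition \ref{prop8} (which is why the conclusion holds only w.h.p.), and the only arithmetic to verify is $1/65-1/7200>1/130$, which holds comfortably (with almost a factor of two to spare). There is no genuine obstacle here; if anything, the point requiring a little care is simply that Proposition \ref{prop8} is applied uniformly over all balanced partitions, but that is already built into its statement.
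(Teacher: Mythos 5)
Your argument is correct and is essentially the paper's own proof: apply Proposition \ref{prop8} to lower-bound the crossing degree by $pn^3/65$, use the definition of $C_2$ to cap the number of crossing $F$-edges through each such vertex by $\epsilon_2 pn^3$, subtract, and sum over $C_2$. The one small addition you make --- noting explicitly that distinct vertices of $C_2$ lie in disjoint sets of crossing edges because each $e\in M$ meets $A_1$ in exactly one vertex --- is a point the paper leaves implicit, and it is a correct and worthwhile clarification.
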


\begin{proof}
For every vertex $x \in C_2$, the number of edges in $F [\Pi]$  containing $x$ is at most $\epsilon_2pn^3$.
On the other hand,  by Proposition \ref{prop8}, w.h.p. the crossing degree of $x$ in $G$ is at least $\frac{p{n}^3}{65}$. Thus,
there are at least $\frac{p{n}^3}{65}-\epsilon_2pn^3\ge \frac{p{n}^3}{130}$ edges of $M$ incident to $x$, so $|M|\geq \frac{p{n}^3}{130}|C_2|.$
\end{proof}

Next we present the proof of Lemma \ref{lem13}.
First we divide the edges in $B_1$ into three classes.
Set
$$B_1^{(1)}=\{e\in B_1: |e\cap C|\leq 3 , \ |e\cap C_1|\geq 1\},$$
$$B_1^{(2)}=\{e\in B_1\setminus B_1^{(1)}: |e\cap C|\leq 3,|e\cap C_2|\geq 1\},$$
$$B_1^{(3)}=\{e\in B_1\setminus (B_1^{(1)}\cup B_1^{(2)})\}.$$
Considering the cardinality of $B_1^{(1)}, B_1^{(2)},B_1^{(3)}$, we have the following three cases. Recall that we have proved three different lower bounds for $|M|$, we will show in any one of the following three cases, a lower bounds for $|M|$ is larger than $4|B_1|$.

\noindent{\bf Case 1.} $3|B_1^{(1)}|\geq |B_1|$.

For any vertex $x \in C_1$ and $y \in V(G)\setminus
C$, the co-degree of $x$ and $y$ is at most $pn^2$ by Proposition
\ref{prop5}. Since the choices of $y$ is less than $n$, there are at
most $pn^3$ edges in $B_1^{(1)}$ containing $x$. Thus, we have
$|B_1^{(1)}|\leq pn^3|C_1|$. Bearing in mind that $|M|\geq
\frac{{\epsilon_1}^2\epsilon_2}{8\epsilon_3}pn^3|C_1|$, which is
shown in Claim \ref{lem18}, we can obtain that
$|M|\geq 13pn^3|C_1|\ge 13|B_1^{(1)}|>4|B_1|$.

\noindent{\bf Case 2.} $3|B_1^{(2)}|\geq |B_1|$.

Every $x \in C_2$ is in less than $\epsilon_2pn^3$ crossing edges of
$F$. Note that every edge in $B_1^{(2)}$ has at least one vertex in
$C_2$. If there exist more than $\epsilon_2pn^3$ edges with form
$xyuw$, where $ y\in A_1$, $ u\in A_2$, $ w\in A_3$, then we can
increase the number of crossing edges across the partition by moving
$x$ to $A_4$. Thus, in $F$, there are at most  $\epsilon_2pn^3$
edges of form $xyuw$, where $ y\in A_1$, $ u\in A_2$, $ w\in A_3$.
Furthermore, we can deduce that the total number of edges with form
$xyuw$, where $ y\in A_1$, $ u\in A_i$, $ w\in A_j$, $i\neq j \in
\{2,3,4\}$, is at most $3\epsilon_2pn^3$.

For any $x \in C_2$ and $y\in N_L(x)$, we  count the edges with form  $xyuw$, where  $ u,w\in A_2$.
Realize that if there exists a vertex $u\in A_2$, such that there $xyuw_1$, $xyuw_2$, $\ldots$,  $xyuw_{\epsilon_1pn}\in F$, where $u,w_1,\ldots,w_{\epsilon_1pn}\in A_2$, then for any $x' \neq x \in C_2$ and $y' \neq y \in A_1$, $x'y'w_iw_j$ is not in $F$, where $i\neq j$, otherwise, there exists a $T$ in $F$. Since we want to prove $|M|> 4|B_1|$, for every vertex $u\in A_2$, we assume the number of edges $xyuw$, where  $w\in A_2$ is less than ${\epsilon_1pn}$.
 So the number of edges $xyuw$, where $x\in C_2$, $y\in N_L(x)$, $u,w\in A_2$, is at most
$\frac{1}{2}\frac{n}{4}\epsilon_1 p n
\frac{n}{4}|C_2|=\frac{\epsilon_1}{32}n^3p|C_2|$.
Thus, the total number of edges $xyuw$, where $x\in C_2$, $y\in N_L(x)$, $u,w\in A_i$, $i=2,3,4$, is at most
$\frac{3\epsilon_1}{32}n^3p|C_2|$.

Now we count the edges $xyzu$, where $x,y,z\in C_2$, $u\notin C$.
Similarly, since $|C_2|\le |C|\le \epsilon_3n$, for every fixed $x\in C_2$ there are at most $\binom{\epsilon_3n}{2}$ choices of pair $(y,z)$. Combining with Proposition \ref{prop4}, the number of such edges is at most $\binom{\epsilon_3n}{2}2np|C_2|\le{{\epsilon_3}^2}n^3p|C_2|$.
Therefore, we have $|B_1^{(2)}|\leq 3\epsilon_2pn^3+\frac{3\epsilon_1}{32}n^3p|C_2|
+{{\epsilon_3}^2}n^3p|C_2|$.
So $|M|\ge\frac{1}{130}n^3p|C_2|>13|B_1^{(2)}|>4 |B_1|.$

\noindent{\bf Case 3.} $3|B_1^{(3)}|\geq |B_1|$.

Let $L'=L[C]\cup L[D]$. For any vertex $x\in D$, $d_L(x)\leq \epsilon_1n.$  And for any $y\in C$,  from Claim \ref{lem15}, we have  $d_{L'}(y)\leq  |C | \leq \epsilon_3n<\epsilon_1n.$
Applying Proposition \ref{prop5}, we have $|B_1^{(3)}|\leq pn^2|E(L')|$.  Combining with Claim \ref{lem17}, $ |M| \ge \frac{|E(L')|}{320\epsilon_1}pn^2 > 13|B_1^{(3)}|>4|B_1|$.

We complete the proof of Lemma \ref{lem13}. \qed

\subsection{Proof of Lemma \ref{lem14}}
Let $\epsilon=0.1$, $\alpha=0.35$, $\xi=0.001$,
$\gamma=\frac{1-\epsilon}{64}=0.146$,
$\alpha'=\frac{2\alpha}{1-\epsilon}=\frac{7}{9}$, $\varphi=0.0001$.
Note that for a balanced partition $\Pi=(A_1,A_2,A_3,A_4)$, $P(\Pi)
= \{(u, v) \in \binom{A_1}{2}: d_{\Pi}(u, v) <
\frac{\alpha}{32}p^2n^3\}$. By Propositions \ref{prop6} and
\ref{prop8}, we have $d(u,v)\le(1 +\epsilon)\frac{p^2}{6}n^3$ for every
pair $(u,v)$ of vertices, and $d_\Pi(v) \ge (1 -\epsilon)\frac{p}{64}n^3$ for
any vertex $v$. So $d_{\Pi}(u, v)
<\frac{2\alpha}{1-\epsilon}pd_\Pi(v)=\alpha'pd_\Pi(v)$ for every
pair $(u,v)$ in $P(\Pi)$. Let $\mathcal{A} $  be the event that for
$\delta > 0$, there exists a balanced cut $\Pi$ such that $q(G) \leq
|G[\Pi]| + |P(\Pi)|\delta n^3p^2$.  It is sufficient to prove
$Pr[\mathcal{A}] = o(1)$ for some $ \delta>0$. In fact, we will show
that $Pr[\mathcal{A}] = o(1)$  for $ \delta< \gamma\varphi/2$.   It
is well known that every graph contains an induced subgraph with at
least half of its edges. If we consider  $P(\Pi)$ as an edge set of
a graph, then we can derive that there exists a
subset $R$ of $P(\Pi)$ with $|R|\ge \frac{1}{2}|P(\Pi)|$, such that
$R$ is the edge set of some bipartite  graph.  By Proposition
\ref{prop11}, we have $d_{P(\Pi)}(v) < \frac{\xi}{p}$ for every
vertex $v$. Therefore, we have
\begin{equation}\label{eq4}
d_{R}(v) < \frac{\xi}{p}.
\end{equation}

Let $X$, $Y$ be disjoint subsets of $V$, $R$ be the edge set of a spanning subgraph of $[X, Y ]$ satisfying (\ref{eq4}), and
$f$ be a function from $X$ to $\{k \in \mathbb{N} : k \geq \gamma pn^3\}$. Denote by $\mathcal{B}(R,X, Y, f)$ the event that there
is a balanced cut $\Sigma$ of $G$ such that for every vertex $x$ in $X$ satisfying
\begin{equation}\label{eq5}
d_{\Sigma}(x)=f(x),\; R\subseteq P(\Sigma),
\end{equation}
and
$$q(G)\le|G[\Sigma]|+\varphi|R|\gamma n^3p^2.$$

We will show that there exists a constant $c$ such that
\begin{equation}\label{eq6}
Pr[\mathcal{B}(R,X, Y, f)] \le e^{-c|R|n^3p^2}.
\end{equation}
Note that if $\delta < \varphi\gamma/2$, then the event $\mathcal{A}$ implies event $ \mathcal{B}(R,X, Y, f)$ holds for some choice of $(R,X, Y, f)$. Realize that there are at most $\binom{\binom{n}{2}}{t}2^tn^{3t}$ choices of  $(R,X, Y, f)$ with $|R| = t$. Hence, by the union bound, if (\ref{eq6}) holds we have
$$Pr[\mathcal{A}]\le\sum\limits_{t > 0}{\binom{\binom{n}{2}}{t}2^tn^{3t}e^{-ctn^3p^2}}=o(1). $$

Now we  prove (\ref{eq6}). We choose all edges
 of $G$ according to the following three
 stages.
\begin{enumerate}
  \item[(a).] Choose the quadruples of vertices of $G$ that contain $x\in X$.
  \item[(b).] Choose the rest of the quadruples of vertices of $G$ except those belonging to $\bigcup\limits_{y \in Y} {[y,\bigcup\limits_{xy \in R} {L(x)} } ]$.
  \item[(c).] Choose the rest of the quadruples of vertices of $G$.
\end{enumerate}
Let $H$ be the subhypergraph of $G$ consisting of the edges
described in (a) and (b), and let $\Gamma$ be a
balanced cut of $H$ maximizing $|H[\Gamma]|$ among balanced cuts
$\Sigma$ satisfying (\ref{eq5}). For each $y\in Y$, set $M (y) =
\bigcup_{xy\in R}L_{\Gamma}(x)$. We have
\begin{equation}\label{eq8}
q(G)\ge |G[\Gamma]|=|H[\Gamma]|+\sum\limits_{y \in Y}{|G[y,M (y)]|}.
\end{equation}
Since $d_{\Gamma}(x)=f(x)\ge\gamma pn^3$,  for any two vertices $x$ and $x'$, we have $d_\Gamma(x, x')\le d(x, x')\le (1+\epsilon)\frac{p^2n^3}{6}\le \frac{pd_{\Gamma}(x)}{\gamma}$.
Also since $R$ satisfies (\ref{eq4}), so for each $y\in Y$ we
have
\begin{align*}
|M (y)|  &\ge \sum\limits_{xy \in R} {\left[d_{\Gamma}(x)-\sum\limits_{x\neq x' \in N_R(y)} {d_\Gamma(x, x')}\right] }\\
\null  &\ge \sum\limits_{xy \in R} {\left[d_{\Gamma}(x)-d_R(y)\max\limits_{x\neq x' \in N_R(y)} {d_\Gamma(x, x')} \right]}\\
\null  &\ge \sum\limits_{xy \in R} {\left[d_{\Gamma}(x)-\frac{\xi}{p}\frac{pd_{\Gamma}(x)}{\gamma} \right]}\\
\null  &\ge  (1-\frac{\xi}{\gamma}){\sum\limits_{xy \in R} {f(x)}}.
\end{align*}
Let $\mu=p\sum\limits_{y \in Y}{|M (y)| }$, then
$\mu\ge(1-\frac{\xi}{\gamma})p\sum \limits_{y \in Y}{\sum\limits_{xy
\in R} {f(x)}}.$ Using Lemma \ref{lem3}, we know that with
probability at least $1-e^{-c_{\epsilon}\mu}\ge1-e^{-c|R|n^3p^2}$,
for constant $c=c_{\epsilon}(\gamma-\xi)$, the sum  $\sum\limits_{y \in Y}{|G[y,M (y)]|}$
in (\ref{eq8}) is at least $(1 -\epsilon)\mu$.

On the other hand, for any balanced cut $\Sigma$, we have
$d_\Sigma(x, y) < \alpha'pd_\Sigma(x)$ for all $(x, y) \in
P(\Sigma)$. So for any balanced cut $\Sigma$ satisfying (\ref{eq5}),
we have
\begin{align*}
|G[\Sigma]|  &\le |H[\Sigma]|+\sum\limits_{y \in Y} {\sum\limits_{xy \in R} {d_\Sigma(x, y)} }\nonumber\\
\null  &\le |H[\Gamma]|+\sum\limits_{y \in Y} {\sum\limits_{xy \in R} {d_\Sigma(x, y)} }\nonumber\\
\null  &\le |H[\Gamma]|+\sum\limits_{y \in Y} {\sum\limits_{xy \in R} {\alpha'pd_\Sigma(x)}}\nonumber\\
\null  &\le |H[\Gamma]|+\alpha'p\sum\limits_{y \in Y} {\sum\limits_{xy \in R} {f(x)}}.
\end{align*}

Recall that, with probability at least $1-e^{-c|R|n^3p^2}$, the sum
$\sum\limits_{y \in Y}{|G[y,M (y)]|}$ in (\ref{eq8}) is at least $(1
-\epsilon)\mu$.  We have
$$q(G)-|G[\Sigma]|\ge[(1 -\epsilon)(1-\frac{\xi}{\gamma})-\alpha']p\sum\limits_{y \in Y}{\sum\limits_{xy \in R} {f(x)}}>\varphi|R|\gamma n^3p^2$$ holds with probability at least $1-e^{-c|R|n^3p^2}$,
which proves (\ref{eq6}). It is easy to check that the   constant
$\delta$ in  Lemma \ref{lem13} satisfies $ \delta< \gamma\varphi/2$,
so we can just let $\delta$ be the  constant $\delta$ in  Lemma
\ref{lem13}. \qed\\


\begin{thebibliography}{1}

\bibitem{AS}
N. Alon, J. Spencer, {\it The Probabilistic Method},
Wiley-Interscience Series in Discrete Mathematics and Optimization.
John Wiley \& Sons, Inc., third edition, 2008.

\bibitem{BSS}
L. Babai, M. Simonovits, J. Spencer, Extremal subgraphs of random
graphs, {\it J. Graph Theory} {\bf 14(5)} (1990), 599--622.

\bibitem{BBHL}
J. Balogh, J. Butterfield, P. Hu, J. Lenz, Mantel's theorem of
random hypergraphs, arXiv:1310.1501.

\bibitem{BMS}
J. Balogh, R. Morris, W. Samotij, Independent sets
in hypergraphs, {\it J. Amer. Math. Soc.}, in press.


\bibitem{PS}
G. Brightwell, K. Panagiotou, A. Steger, On extremal subgraphs of
random graphs, in {\it Proceedings of the Eighteenth Annual ACM-SIAM
Symposium on Discrete Algorithms}, pages 477--485, New York, 2007.
ACM.

\bibitem{Bondy}
J.A. Bondy,  U.S.R. Murty,  {\it Graph Theory},  Springer, New York,
2008.

\bibitem{BK}
N. Bushaw, N. Kettle, Tur\'an numbers for forests of
paths in hypergraphs, {\it SIAM J. Discrete Math.} {\bf 28(2)}
(2014), 711--721.


\bibitem{CFZ}
D. Conlon, J. Fox, Y. Zhao, Extremal results in
sparse pseudorandom graphs, {\it Adv. Math.} {\bf 256} (2014),
206--290.

\bibitem{CG}
D. Conlon, W.T. Gowers. Combinatorial theorems in
sparse random sets, arXiv:1011.4310.

\bibitem{DK}
B. DeMarco, J. Kahn, Mantel's theorem for random graphs,
 {\it Random Struct. Algor.}, doi:
10.1002/rsa.20535.

\bibitem{FF}
P. Frankl, Z. F\"{u}redi, A new generalization of the
Erd\"{o}s-Ko-Rado theorem, {\it Combinatorica} {\bf 3(3--4)} (1983),
341--349.

\bibitem{FJ}
Z. F\"{u}redi, T. Jiang, T. Hypergraph Tur\'an
numbers of linear cycles, {\it J. Combin. Theory Ser. A} {\bf
123(1)} (2014), 252--270.

\bibitem{Mantel}
W. Mantel, Problem 28, in {\it Wiskundige Opgaven,} Volume 10, pages
60--61, 1907.

\bibitem{MV}
D. Mubayi, J. Verstra\"ete, Minimal paths and cycles
in set systems, {\it European J. Combin.} {\bf 28} (2007),
1681--1693.

\bibitem{Pikhurko}
O. Pikhurko, An exact Tur\'{a}n result for the generalized triangle,
{\it Combinatorica} {\bf 28} (2008), 187--208.

\bibitem{Samotij}
W. Samotij, Stability results for random discrete structures,
 {\it Random Struct. Algor.}  {\bf 44} (2014),
269--289.


\end{thebibliography}
\end{document}